\newcommand{\norm}[1]{\|#1\|}
\newcommand{\R}{\mathbb{R}}
\newcommand{\expect}[1]{\mathbb{E}\left[#1\right]}
\newcommand{\expectk}[1]{\mathbb{E}_k\left[#1\right]}
\newcommand{\prox}{\mathrm{prox}}
\newcommand{\ip}[2]{\langle#1,#2\rangle}
\pgfplotsset{compat=1.18}
\pgfplotsset{
    myplotstyle/.style={
    ylabel style={align=center, font=\bfseries\boldmath},
    xlabel style={align=center, font=\bfseries\boldmath},
    x tick label style={font=\bfseries\boldmath},
    y tick label style={font=\bfseries\boldmath},
    scaled ticks=false,
    every axis plot/.append style={thick},
    xmin = 0 , xmax=100, ymin = 0, ymax = 3,
    },
}
\definecolor{mycolor1}{rgb}{0,0.44,0.74}%
\definecolor{mycolor2}{rgb}{0,0.59,0.2}%
\definecolor{mycolor3}{rgb}{0.85,0.325,0.098}%
\DeclareMathOperator*{\argmin}{argmin}
\DeclareMathOperator*{\argmax}{argmax}
\newtheorem{theorem}{Theorem}
\newtheorem{lemma}[theorem]{Lemma}
\newtheorem{corollary}[theorem]{Corollary}
\newtheorem{remark}[theorem]{Remark}
\newtheorem{proposition}[theorem]{Proposition}
\newtheorem{definition}{Definition}
\newtheorem*{proof-sketch}{Proof Sketch}
\newcommand{\algo}{\textsc{Bdca}\xspace}
\newcommand{\algoem}{Block EM\xspace}
\definecolor{mycolor2}{rgb}{0,0.59,0.2}%
\newcommand{\gap}{\mathrm{gap}}
\title{{Randomized} Block Coordinate DC  {Algorithm}}
\author{\name{Hoomaan Maskan} 
\email{hoomaan.maskan@umu.se} \\
\addr{Ume\r{a} University, Sweden} \\[0.5em]
\name Paniz Halvachi 
\email{panizhalvachi@gmail.com} \\
\addr{} \\[0.5em]
\name{Suvrit Sra} 
\email{s.sra@tum.de} \\
\addr{Technical University of Munich, Germany} \\[0.5em]
\name{Alp Yurtsever} 
\email{alp.yurtsever@umu.se} \\
\addr{Ume\r{a} University, Sweden} \\[0.5em]
}
\begin{document}

\maketitle

\begin{abstract}
We introduce an extension of the Difference of Convex Algorithm (DCA) in the form of a  {randomized} block coordinate approach for problems with separable structure. 
For $n$ coordinate-blocks and $k$ iterations, our main result proves a non-asymptotic convergence rate of $O(n/k)$  {in expectation, with respect to a stationarity measure based on a Forward-Backward envelope}. 
Furthermore, leveraging the connection between DCA and Expectation Maximization (EM), we propose a  {randomized} block coordinate EM algorithm. 
\end{abstract}

\section{Introduction}

In nonconvex optimization, many problems find their abstraction through the lens of Difference of Convex (DC) programming, which involves minimizing an objective function represented as the difference of convex functions. 
This paper focuses on the following DC programming template:
\begin{equation}\label{eqn:main}
     \min_{\bm{x}\in \mathcal{M}} ~~ \phi(\bm{x}) := f(\bm{x}) + g(\bm{x}) - h(\bm{x}),
\end{equation}
where $f$, $g$, and $h$ are  {real-valued} lower semi-continuous convex functions defined on the closed convex set $\mathcal{M} \subseteq \mathbb{R}^m$,  { and throughout, we assume that a finite solution $\bm{x}_\star \in \mathcal{M}$ exists, satisfying $\phi_\star := \phi(\bm{x}_\star) \leq \phi(\bm{x})$ for all $\bm{x} \in \mathcal{M}$.}
We assume that the function $f$ is smooth  {(\textit{i.e.,} its gradient is Lipschitz continuous)}, and that the function $g$ and the set $\mathcal{M}$ exhibit a block-separable structure in relation to the coordinates of $\bm{x}$ (see \Cref{sec:BC-DC}). 
We do not impose the smoothness assumption on the terms $g$ and $h$. 

A large and popular class of algorithms for solving problem~\eqref{eqn:main}, introduced  {in} \cite{TAO1986249}, comprises instances of the DCA method. 
These algorithms rely on the simple yet effective idea of approximating  $\phi(\bm{x})$ with the convex surrogate function obtained by using the first-order Taylor approximation of the concave component $-h(\bm{x})$. 
Starting from a feasible initial point $\bm{x}^0 \in \mathcal{M}$, DCA solves a sequence of convex optimization subproblems defined as 
\begin{equation}\label{eqn:main-surrogate}
     \min_{\bm{x}\in \mathcal{M}} ~~ \hat{\phi}(\bm{x},\bm{x}^k) := f(\bm{x}) + g(\bm{x}) - h(\bm{x}^k) - \ip{ {\bm v^k}}{\bm{x} - \bm{x}^k},
\end{equation}
where $\bm{x}^k$ represents the current estimate  {and $\bm v^k\in\partial h(\bm{x}^k)$ denotes a subgradient of $h$ at $\bm{x}^k$.}

Observe that function $\hat{\phi}$ is a global upper bound on $\phi$ since $h$ is convex; as a result, DCA can be viewed as a variant of the Majorization–Minimization (MM) algorithm \cite{lange2016mm}.

The general template of DCA does not specify how, or to what degree of precision, we should solve these subproblems, and various instances can be derived by employing different techniques for their resolution. 
Motivated by its strong empirical performance, wide applicability, simplicity, and its interpretablity, DCA has been extensively studied. 

With the recent focus in machine learning on nonconvex optimization problems, along with the increasing complexity and scale of these problems, the demand for more efficient and scalable algorithms for nonconvex optimization continuously grows. 
A particular technique in addressing this challenge is the `coordinate descent' (CD) approach. 
The main idea in CD is to minimize the multivariate objective function by updating one variable (or a \emph{block} of variables) at every iteration, while keeping the others fixed. 
This approach often leads to simple subproblems that can be solved more efficiently, particularly for large-scale optimization problems commonly encountered in data science and related fields. 

DC programming and DCA have been widely used in machine learning applications. Examples include kernel selection \cite{argyriou2006dc}, sparse principal component analysis \cite{sriperumbudur2007sparse}, discrepancy estimation in domain adaptation \cite{awasthi2024best}, training neural networks \cite{awasthi2024dc}, and many more. 
However, to our knowledge, there is currently no true CD variant of DCA with convergence guarantees. 
Our main goal in this paper is to address this gap. 
With this in mind, we summarize now the key contributions of this paper: 
\begin{list}{{\color{darkred!90}\tiny$\blacksquare$}}{\leftmargin=2em}\vspace*{-5pt}
  \setlength{\itemsep}{2pt}
\item Our primary contribution is the development of a novel variant of the DCA method that incorporates randomized CD updates. 
We refer to this algorithm as the Block Coordinate DC algorithm (\algo). 
We analyze \algo and demonstrate that it converges  {in expectation} to a first-order stationary point of the problem at a non-asymptotic rate of $\mathcal{O}(n/k)$, for $n$ blocks and $k$ iterations. 
Importantly, our guarantees do not necessitate the functions $g$ or $h$ to be smooth. 
\item DCA relates to the well known Expectation Maximization (EM) algorithm when dealing with exponential family of distributions \cite{yuille2003concave,lethi30years}. 
Building on this connection, we introduce a  {randomized} block coordinate EM method, referred to as \algoem. %
\end{list}

\section{Related Work}

For a comprehensive survey on the variants and applications of DCA, we refer to \cite{lethi30years}. 
Here, we focus specifically on the variants involving block-coordinate or alternating updates. 

\subsection{Related Work on BDCA}

 {Pham Dinh et~al.}~\cite{pham2022alternating} recently proposed an alternating DC algorithm for partial DC problems
involving two variables, where the objective retains a DC form when one
variable is fixed. 
The algorithm is shown to converge to a Fréchet/Clarke critical point of the objective function under the Kurdyka-Łojasiewicz  {(KL)} property, with non-asymptotic convergence guarantees.

More recently,  {Yuan}~\cite{yuan2023coordinate} proposed a block-coordinate method for DC problems; however, this method differs from DCA, in that it solves a proximal subproblem instead of the DCA subproblem in  \eqref{eqn:main-surrogate}. 
Furthermore, their analysis is based on the so-called \emph{Luo-Tseng error bound} assumption, which posits that the (sub)gradients of the objective function converge to zero in norm continuously as they approach a stationary point. 
This assumption limits the applicability of these results in nonsmooth settings, as most nonsmooth functions of interest exhibit discontinuous changes in subgradient norm. 

DCA is also known as the convex-concave procedure (CCCP) when both the convex and concave terms are differentiable. 
Recently,  {Yurtsever and Sra}~\cite{yurtsever2022cccp} established
an equivalence between CCCP and the Frank-Wolfe algorithm applied to an
epigraph reformulation of the original DC problem.
As a result, they showed a $\mathcal{O}(1/k)$ non-asymptotic convergence rate for CCCP. 

Our analysis differs from  {theirs} in several key aspects. 
While they transfer guarantees from the Frank-Wolfe connection, we provide a direct proof. We consider the DCA template with potentially nonsmooth terms, which are managed through their subgradients. 
They demonstrate convergence in terms of 
\begin{equation*}
    \max_{\bm{x} \in \mathcal{M}} ~  {\Big\{} f(\bm{x}^k) - f(\bm{x}) - \ip{\nabla h(\bm{x}^k)}{\bm{x}^k-\bm{x}}  {\Big\}}
\end{equation*}
which is not amenable to our block-coordinate analysis unless $f$ is also separable. 
Consequently, we identify a new gap function (see \Cref{sec:BC-DC}) suitable for measuring proximity to first-order stationarity, which is an essential component of our analysis. 

\subsection{Related Work on \algoem}

 {Kumar and Schmidt}~\cite{kumar2017convergence} analyzed the convergence guarantees of the EM algorithm from a majorization-minimization perspective. 
Assuming that the surrogate optimization problems are strongly convex, they established that EM converges at a rate of $\mathcal{O}(1/k)$ in terms of the squared gradient norm of the negative log-likelihood. 
However, this strong convexity assumption may not hold in general, even for exponential family distributions. 

 {More recently, Kunstner et al.}~\cite{Kunstner2020HomeomorphicInvarianceOE} studied the non-asymptotic convergence of EM for exponential family distributions by drawing a connection to mirror descent, establishing a convergence rate of $\mathcal{O}(1/k)$ in Kullback-Leibler divergence and linking it to first-order stationarity through Bregman divergences.

\subsection{Other Work on Nonconvex BC Methods
} 
Block Successive Upperbound Minimization algorithm was proposed to minimize an approximated upperbound of any nonconvex and/or nonsmooth objective function  {by Razaviyayn et al.}~\cite{razaviyayn2013unified}. 
This method considers optimizing a sequence of approximate objective functions. 
They showed the asymptotic convergence of their  {method} for quasi-convex functions or objectives with compact level sets. 
Also, they assume that each subproblem of BSUM has a unique solution. 

 {Among} recent studies in nonconvex optimization using block-coordinate methods,  {Xu and Yin}~\cite{xu2017globally} proposed a block prox-linear method.
Assuming Lipschitz smoothness of the differentiable parts, they prove the convergence of a subsequence of their algorithm to a stationary point.
Further, assuming the KL condition, they show that the entire sequence converges to a critical point and estimated its asymptotic convergence rate. 

 {Aubry et al.~\cite{aubry2018new} proposed} a mixture of maximum block improvement and a sequential minimization method for continuously differentiable nonconvex problems,  {with asymptotic first-order optimality guarantees and an application to wireless sensor allocation problem.}

More recent studies targeted a nonasymptotic convergence analysis. 
In the realm of nonconvex composite optimization,  {Chorobura and Necoara}~\cite{chorobura2023random} considered objectives that
include two nonconvex nonseparable terms where one of them has Lipschitz gradients while the other function is continuously differentiable. 
They showed a convergence rate of $\mathcal{O}(n/\sqrt{k})$ in the sense of expected gradient norm when utilizing a randomized selection of the blocks. 
Their cyclic scheme achieves a rate of $\mathcal{O}(n^2/\sqrt{k})$ in the same sense.

 {Cai et al.~\cite{cai2023cyclic} recently proposed} a cyclic coordinate descent algorithm with variance reduction. 
Their objective considers a block separable and another separable sum function whose proximal operator is efficiently computable. 
Their analysis assumes a nonstandard Lipschitz gradient  {assumption defined with respect to block-dependent semi-norms} and achieves a rate of $\mathcal{O}(n/k)$ in the sense of distance of the objective's subdifferential to zero.

 {Deng and Lan studied a stochastic block mirror descent algorithm~\cite{dang2015stochastic} and, more recently, proposed a randomized coordinate proximal subgradient method~\cite{deng2020efficiency} applicable to nonconvex problem templates.
Gao et~al.~\cite{gao2020randomized} developed a randomized Bregman coordinate descent algorithm for nonconvex functions under relative smoothness. 
In a related direction, Faust et~al.~\cite{faust2023bregman} provided a Bregman divergence interpretation of DCA.}

 {Finally, Latafat et al.~\cite{latafat2022block} proposed a proximal coordinate method for nonconvex composite optimization. Their formulation complements ours in terms of separability assumptions. They assume the smooth component to be separable, whereas the nonsmooth term is not. However, unlike our method, which updates coordinates through local subproblems involving only $g_i$ for the selected coordinate blocks, their algorithm requires involves proximal mapping of the full $g$ before updating a randomly selected coordinate.
}

\section{Block Coordinate DCA}\label{sec:BC-DC} 

We begin by establishing notation and restating our assumptions for the model problem. 

We assume that the function $f$ is $L$-smooth, meaning that its gradient is Lipschitz continuous with a constant $L \geq 0$. 
In addition, we assume that the function $g$ and set $\mathcal{M}$ exhibit a block separable structure in relation to the coordinates of $\bm{x}$:
\begin{equation*}
     g(\bm{x}) = \sum_{i=1}^n g_i(\boldsymbol{D}_i \bm{x}),%
\end{equation*}
where $\boldsymbol{D}_i$ represents $(m_i \times m)$ dimensional row subsets of the identity matrix $\boldsymbol{I}$, serving as the selection operator for a non-overlapping partition of the coordinates into $n$ blocks: 
\begin{equation*}
     \sum_{i=1}^n \boldsymbol{D}_i^\top \boldsymbol{D}_i = \boldsymbol{I} \quad \text{and} \quad \sum_{i=1}^n m_i = m.
\end{equation*}
Similarly, $\mathcal{M}$ can be decomposed as $\mathcal{M}_1 \times \cdots \times \mathcal{M}_n$, where the components $\mathcal{M}_i \in \mathbb{R}^{m_i}$, such that 
\begin{equation*}
    \bm{x} \in \mathcal{M}
    \iff
    \boldsymbol{D}_i\bm{x} \in \mathcal{M}_i \quad \text{for} \quad i=1,\ldots,n.
\end{equation*}
For the ease of presentation, we define the following coordinate vector notation:
\begin{equation*}
    x_i = \boldsymbol{D}_i \bm{x}, 
    ~~
    \bm{x}_i = \boldsymbol{D}_i^\top \boldsymbol{D}_i \bm{x}, 
    ~~ \text{and} ~~
    \bar{\bm{x}}_i = (\boldsymbol{I} - \boldsymbol{D}_i^\top \boldsymbol{D}_i) \bm{x}. 
\end{equation*}
Here, $x_i \in \mathbb{R}^{m_i}$ represents the $i^{\text{th}}$ coordinate block of $\bm{x} \in \mathbb{R}^m$.  
The vector $\bm{x}_{i}$ is an extension of $x_i$ to $\mathbb{R}^m$ with all other blocks padded with zeros.  
Conversely, $\bar{\bm{x}}_i$ is the complement of $\bm{x}_i$, containing zeros in the $i^{\text{th}}$ block and ensuring that $\bm{x}_i + \bar{\bm{x}}_i = \bm{x}$.

The foundation of DCA is the surrogate convex objective obtained by linearizing the concave term in the vicinity of current estimate $\bm{x}^k$, as defined in \eqref{eqn:main-surrogate}. 
To derive a block coordinate variant of DCA, we minimize this surrogate function along specific directions determined by randomly chosen coordinate blocks. 
The algorithm operates as follows:
\begin{enumerate}[topsep=0.25em, itemsep=0.25em, leftmargin=2em]
    \item Start from a feasible initial point $ \bm{x}^0 \in \mathcal{M} $.
    \item For $ k = 1,\ldots,K $, update the estimate as follows:
    \begin{enumerate}[topsep=0.25em, itemsep=0.25em]
        \item Choose a coordinate block $ i_k $ uniformly at random from the set $ \{1,\ldots,n\} $.
        \item Keep the values fixed for all coordinates except $ i_k $, resulting in $ \smash{\bar{\bm{x}}^{k+1}_{i_k} = \bar{\bm{x}}^{k}_{i_k} }$.
        \item Update the $\smash{i_k^{\text{th}}}$ block by minimizing the surrogate objective \eqref{eqn:main-surrogate} along these coordinates. 
    \end{enumerate}
\end{enumerate}
Step 2c in this algorithm description amounts to solving the following subproblem:
\begin{equation*}
     \min_{\bm{x}\in \mathcal{M}} ~~ \hat{\phi}(\bm{x},\bm{x}^k)  ~~~ \mathrm{subj.to} ~~~ \bm{x} = \bm{x}_{i_k} + \bar{\bm{x}}^{k}_{i_k},
\end{equation*}
which is clearly equivalent to solving the following:
\begin{equation*}
    \min_{x_{i_k} \in \mathcal{M}_{i_k}} ~~ \hat{\phi}( {\boldsymbol{D}}_{i_k}^\top x_{i_k} + \bar{\bm{x}}^{k}_{i_k},\bm{x}^k). 
\end{equation*}
To simplify the presentation, we introduce the notation $\hat{\phi}_i: \mathbb{R}^{m_i} \times \mathbb{R}^m \to \mathbb{R}$ for the surrogate objective restricted to the $i^{\text{th}}$ coordinate block:
\begin{equation*}
\begin{aligned}
    & \hat{\phi}_{i}(x_i, \bm{x}^k) := \hat{\phi}( {\boldsymbol{D}}_{i}^\top x_i + \bar{\bm{x}}^{k}_{i},\bm{x}^k) \\
    & \qquad = \hat{\phi}(\bm{x}_i + \bar{\bm{x}}^{k}_{i},\bm{x}^k)\\
    & \qquad = \textcolor{blue}{f(\bm{x}_i + \bar{\bm{x}}^{k}_{i}) + g_i(x_i)} - g_i(x_i^k) + g(\bm{x}^k) - h(\bm{x}^k) \textcolor{blue}{ - \ip{\bm v_i^k}{x_i \textcolor{black}{- x_i^k}}},
\end{aligned}
\end{equation*}
 { where $\bm v_i^k \in \partial_i h(\bm{x}^k)$. }Here, only the terms in blue are relevant for solving the subproblem, as the remaining terms are constant with respect to $x_i$. 
Building on this discussion, we propose Block Coordinate DC Algorithm (\algo), as outlined in \Cref{alg:BCDC}.

\begin{algorithm*}[tb]
   \caption{Block Coordinate DC Algorithm (\algo)}
   \label{alg:BCDC} 
\begin{algorithmic}
{\setstretch{1.15}
   \STATE {\bfseries Input:} Starting point $\bm{x}^{1}\subseteq \mathcal{M}$, and total number of itreations $K$. %
   \FOR{ $k=1$ {\bfseries to} $K$}
        \STATE Select a subgradient $\bm{v}^k \in \partial h(\bm{x}^k)$ 
        \STATE Choose $i_k$ from $\{1,\ldots,n\}$ uniformly at random
        \STATE Find $ x_{i_k}^{k+1} \in \argmin_{x_{i_k} \in \mathcal{M}_{i_k}} f(\bm{x}_{i_k} + \bar{\bm{x}}_{i_k}^{k}) + g_{i_k}(x_{i_k})-\langle v^k_{i_k},x_{i_k} \rangle$
        \STATE Set $\bm{x}^{k+1} =\bm{x}_{i_k}^{k+1} + \bar{\bm{x}}_{i_k}^{k}$
   \ENDFOR
   \STATE {\bfseries Output:} $\bm{x}^{k+1}$
}
\end{algorithmic}
\end{algorithm*}

\subsection{Convergence Guarantees for BDCA}
In this section we analyze the convergence guarantees of \algo. 
Specifically, we establish an upper bound on a stationarity measure for \algo and use it to prove the convergence rate of \Cref{alg:BCDC}.  {For clarity of exposition, we first develop the analysis assuming $h$ is differentiable, which yields convergence to first-order stationary points. We then show that the same analysis extends to the non-differentiable case, where the guarantee weakens to convergence to critical points.}

\begin{definition}%
\label{def:gap}
We use the following `gap' function as a measure of closeness to a first order stationary point:
\begin{equation*}
   \gap^L_{\mathcal{M}}(\bm{y}) = \max_{\bm{x} \in \mathcal{M}}  \bigg\{ \ip{\nabla f(\bm{y})  - \nabla h(\bm{y})}{ \bm{y} - \bm{x}}  
   + g(\bm{y}) - g(\bm{x}) - \frac{L}{2} \norm{\bm{x} - \bm{y}}^2 \bigg\}.  
\end{equation*}
\end{definition}
 {To the best of our knowledge, this measure of stationarity has not been explored in DC formulations.
It is closely related to the Forward–Backward envelope (see Section~2.1 in \cite{fatkhullin2024taming} and references therein).}
The next lemma shows that this is a suitable measure for first-order stationarity. 

\begin{lemma} \label{lem:gap}
 {Let $\mathcal{M} \subseteq \mathbb{R}^m$ be a closed convex set, and let $f,g,h : \mathcal{M} \to \mathbb{R}$ be lower-semicontinuous convex functions, where $f$ is $L$-smooth and $h$ is continuously differentiable.}
Then, $\gap^L_{\mathcal{M}}(\bm{y}) \geq 0$ for all $\bm{y}\in\mathcal{M}$, and $\gap^L_{\mathcal{M}}(\bm{y}) = 0$ if and only if $\bm{y}$ is a first-order stationary point of problem~\eqref{eqn:main}. 
\end{lemma}

\begin{proof}
    The first statement is straightforward, since the  expression being maximized becomes zero when we set $\bm{x} = \bm{y}$, and $\bm{y} \in \mathcal{M}$. 
    
    For the second statement, let us recall the definition of first-order stationarity. 
    A point $\bm{y}$ is said to be a first-order stationary point of problem~\eqref{eqn:main} if the following condition holds:
    \begin{equation}\label{eqn:fo-stat}
        0 \in \nabla f(\bm{y}) + \partial g(\bm{y}) - \nabla h(\bm{y}) + \mathcal{N}_{\mathcal{M}}(\bm{y}).
    \end{equation}
    Here, $\partial g$ is the subdifferential of $g$ and $\mathcal{N}_{\mathcal{M}}$ represents the normal cone of $\mathcal{M}$. 

    If \eqref{eqn:fo-stat} holds, it implies there exists $\bm{u} \in \partial g(\bm{y})$ such that
    \begin{align}
        &  -\nabla f(\bm{y}) - \bm{u} + \nabla h(\bm{y}) \in \mathcal{N}_{\mathcal{M}}(\bm{y}) \label{eqn:stationary-cond} \\[0.5em]
        \iff ~ & \ip{\nabla f(\bm{y}) + \bm{u} - \nabla h(\bm{y})}{\bm{y} - \bm{x}} \leq 0, ~ \forall \bm{x} \in \mathcal{M} \notag\\[0.5em]
        \overset{\text{convex $g$}}{\implies} ~ & \ip{\nabla f(\bm{y}) - \nabla h(\bm{y})}{\bm{y} - \bm{x}} + g(\bm{y}) - g(\bm{x}) \leq 0,  \notag\\[0.5em]
        \implies ~ & \smash{\ip{\nabla f(\bm{y}) - \nabla h(\bm{y})}{\bm{y} - \bm{x}} + g(\bm{y}) - g(\bm{x})} - \tfrac{L}{2} \norm{\bm{x} - \bm{y}}^2 \leq 0, \notag
        \\[0.5em]
        \iff ~ & \smash{\gap^L_{\mathcal{M}}(\bm{y}) \leq 0}, \notag
    \end{align}
      {for all $\bm x\in\mathcal{M}$, where the second line follows from the definition of normal cone.} 
     Combining this with the first statement, which asserts that $\smash{\gap^L_{\mathcal{M}}(\bm{y}) \geq 0}$ for all $\bm{y} \in \mathcal{M}$, we conclude that $\smash{\gap^L_{\mathcal{M}}(\bm{y}) = 0}$ if $\bm{y}$ is a first-order stationary point. 

    Next, we show that if $\smash{\gap^L_{\mathcal{M}}(\bm{y}) = 0}$, then $\bm{y}$ is a first-order stationary point. 
    Suppose $\smash{\gap^L_{\mathcal{M}}(\bm{y}) = 0}$. Then, 
    \begin{equation*}
        \ip{\nabla f(\bm{y}) - \nabla h(\bm{y})}{ \bm{x} - \bm{y}} + g(\bm{x}) - g(\bm{y}) + \frac{L}{2} \norm{\bm{x} - \bm{y}}^2 \geq 0, 
    \end{equation*}
    for all $\bm{x} \in \mathcal{M}$. Consider $\bm{x} = \bm{y} + \alpha \bm{d}$ for an arbitrary feasible direction $\bm{d}$ (unit norm) and step-size $\alpha > 0$:
    \begin{equation*}
    \begin{aligned}
         \alpha \ip{\nabla f(\bm{y}) - \nabla h(\bm{y})}{ \bm{d}} + g(\bm{y}&+\alpha\bm{d}) - g(\bm{y}) + \frac{L}{2} \alpha^2 \geq 0, \quad \forall \alpha\bm{d}: \bm{y} + \alpha \bm{d} \in \mathcal{M}.
    \end{aligned}
    \end{equation*}
    By dividing both sides by $\alpha$ and taking the limit as ${\alpha \to 0^+}$, we obtain
    \begin{align*}
         \ip{\nabla f(\bm{y}) - \nabla h(\bm{y})}{ \bm{d}} &+ \ip{\bm{u}}{\bm{d}} \geq 0, \qquad \forall \bm{d}: \lim_{\alpha \to 0^+} \left(\bm{y} + \alpha \bm{d}\right) \in \mathcal{M}, 
    \end{align*}
    for some $\bm{u} \in \partial g(\bm{y})$. 
    Since $\mathcal{M}$ is a closed and convex set, we have $\lim_{\alpha \to 0^+} (\bm{y}+\alpha\bm{d}) \in \mathcal{M}$ for all $\bm{d} = \bm{x} - \bm{y}$ such that $\bm{x} \in \mathcal{M}$. 
    Consequently, we have
    \begin{align*}
        \ip{\nabla f(\bm{y}) - \nabla h(\bm{y})}{ \bm{x} - \bm{y}} + \ip{\bm{u}}{\bm{x} - \bm{y}} \geq 0, 
    \end{align*}
    for all $\bm{x} \in \mathcal{M}$. This is equivalent to \eqref{eqn:stationary-cond}, thus concludes the proof. 
\end{proof}

\begin{remark}\label{rem:gap_prox}
    We can relate the gap function in \Cref{def:gap} to the Moreau-Yosida envelope:
    $\gap^L_{\mathcal{M}}(\bm{y}) = 0$ if and only if $\bm{y}$ is a fixed point of the proximal mapping:
    \begin{equation*}
        \bm{y} = \prox_{\frac{1}{L} g+I_{\mathcal{M}}} \left((\bm{y}-\tfrac{1}{L}\big(\nabla f(\bm{y}) - \nabla h(\bm{y}) \big) \right) 
    \end{equation*}
    where $I_{\mathcal{M}}$ is the indicator function of the set $\mathcal{M}$. 
     {
    Moreover, $\gap^L_{\mathcal{M}}(\bm{y})$ provides a stronger measure than the proximal gradient mapping residual, satisfying
    \begin{equation*}
        \gap^L_{\mathcal{M}}(\bm{y}) \geq \frac{L}{2} \left\| \bm{y} - \prox_{\frac{1}{L} g+I_{\mathcal{M}}} \left((\bm{y}-\tfrac{1}{L}\big(\nabla f(\bm{y}) - \nabla h(\bm{y}) \big) \right) \right\|^2.
    \end{equation*}
    }
    The proof of this connection is deferred to the appendix. 
\end{remark}

We can now present the main convergence result. 

\begin{theorem}%
\label{thm:theorem-gap}
    Suppose $\bm{x}^1, \ldots, \bm{x}^K$ is a sequence generated by \algo for solving problem~\eqref{eqn:main}. 
    Then, the following bound holds:
    \begin{equation*}
        \min_{k\in\{1,\ldots,K\}} \expect{\gap^L_{\mathcal{M}}(\bm{x}^k)}
        \leq \frac{n}{K} \left(\phi(\bm{x}^1) - \phi^\star\right).
    \end{equation*}
\end{theorem}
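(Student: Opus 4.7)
The plan is to establish a one-step expected-descent inequality of the form $\mathbb{E}_{i_k}[\phi(\bm{x}^{k+1}) \mid \bm{x}^k] \leq \phi(\bm{x}^k) - \tfrac{1}{n}\gap^L_{\mathcal{M}}(\bm{x}^k)$ and then telescope. This follows the standard template for randomized block-coordinate methods, but with the nonconvex stationarity measure from \Cref{def:gap} on the right-hand side (using the subgradient variant from the following remark, with $v^k \in \partial h(\bm{x}^k)$ in place of $\nabla h(\bm{x}^k)$) rather than a suboptimality gap.

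The first and most delicate step is to prove, for every comparison point $y \in \mathcal{M}_{i_k}$, the deterministic inequality
\begin{equation*}
\phi(\bm{x}^{k+1}) \leq \phi(\bm{x}^k) + \langle \nabla_{i_k} f(\bm{x}^k) - v^k_{i_k}, y - x^k_{i_k}\rangle + \tfrac{L}{2}\|y - x^k_{i_k}\|^2 + g_{i_k}(y) - g_{i_k}(x^k_{i_k}).
\end{equation*}
Three ingredients combine to yield this. First, convexity of $h$ together with $v^k \in \partial h(\bm{x}^k)$ gives $-h(\bm{x}^{k+1}) \leq -h(\bm{x}^k) - \langle v^k_{i_k}, x^{k+1}_{i_k} - x^k_{i_k}\rangle$, using that only block $i_k$ changes between $\bm{x}^k$ and $\bm{x}^{k+1}$. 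Second, block separability of $g$ gives $g(\bm{x}^{k+1}) - g(\bm{x}^k) = g_{i_k}(x^{k+1}_{i_k}) - g_{i_k}(x^k_{i_k})$. Third, optimality of $x^{k+1}_{i_k}$ for the \algo subproblem bounds $f(\bm{x}^{k+1}) + g_{i_k}(x^{k+1}_{i_k}) - \langle v^k_{i_k}, x^{k+1}_{i_k}\rangle$ by the same expression evaluated at $y$, and the $L$-smoothness descent lemma for $f$ at $\bm{x}^k$ then majorizes $f$ at the comparison point by $f(\bm{x}^k) + \langle \nabla_{i_k} f(\bm{x}^k), y - x^k_{i_k}\rangle + \tfrac{L}{2}\|y - x^k_{i_k}\|^2$ (the collapse to block $i_k$ is immediate because only that block is perturbed). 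Adding these three bounds and cancelling, the stray $f(\bm{x}^{k+1})$, $g_{i_k}(x^{k+1}_{i_k})$ and $\langle v^k_{i_k}, x^{k+1}_{i_k}\rangle$ terms disappear, leaving exactly the displayed inequality.

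From here the argument becomes mechanical. Pick $\bm{y}^\star \in \mathcal{M}$ attaining the maximum in $\gap^L_{\mathcal{M}}(\bm{x}^k)$, substitute $y = \boldsymbol{D}_{i_k}\bm{y}^\star$ in the inequality above, and take expectation over $i_k$ uniform on $\{1,\dots,n\}$. Using $\sum_i \boldsymbol{D}_i^\top \boldsymbol{D}_i = \boldsymbol{I}$ and the disjoint-support nature of the blocks, the blockwise inner product, squared norm, and $g$-difference assemble into their full-space counterparts at $\bm{y}^\star$ and $\bm{x}^k$, so by definition of $\bm{y}^\star$ the right-hand side collapses to $-\tfrac{1}{n}\gap^L_{\mathcal{M}}(\bm{x}^k)$. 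Taking total expectation, telescoping the resulting bound over $k = 1,\dots,K$, and using $\mathbb{E}[\phi(\bm{x}^{K+1})] \geq \phi^\star$ produces $\tfrac{1}{n}\sum_{k=1}^K \mathbb{E}[\gap^L_{\mathcal{M}}(\bm{x}^k)] \leq \phi(\bm{x}^1) - \phi^\star$, after which the minimum over $k$ is at most the average.

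The main obstacle, to my mind, is engineering the per-step inequality in the second step: because $f$ couples all blocks, subproblem optimality alone does not upper-bound $\phi(\bm{x}^{k+1})$, and the $L$-smoothness majorizer must be applied \emph{at the comparison point} rather than at $\bm{x}^{k+1}$. Sequencing the three ingredients correctly so that the spurious $\bm{x}^{k+1}$-dependent terms cancel, and producing a clean blockwise surrogate whose uniform-blockwise average is precisely $-\gap^L_{\mathcal{M}}(\bm{x}^k)/n$, is where the substance of the argument lies; everything afterwards is standard telescoping.
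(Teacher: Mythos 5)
Your proposal is correct and follows essentially the same route as the paper's proof: the per-step inequality you assemble from convexity of $h$, block separability of $g$, subproblem optimality, and the $L$-smoothness majorizer applied at the comparison point is exactly the paper's key inequality, and the expectation-over-blocks, maximization over $\mathcal{M}$, and telescoping steps coincide. The only cosmetic difference is that you substitute the maximizer of the gap before averaging over $i_k$, whereas the paper keeps an arbitrary $\bm{x}\in\mathcal{M}$, takes the conditional expectation, and maximizes afterwards.
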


\begin{proof}

We begin by noting that $\forall x_{i_k} \in \mathcal{M}_{i_k}$, we have
\begin{align}
     \phi(\bm{x}^{k+1}) \leq &  {f(\bm{x}^{k+1}) + g(\bm{x}^{k+1}) - h(\bm{x}^{k}) - \ip{\nabla h(\bm{x}^{k})}{\bm{x}^{k+1} - \bm{x}^{k}}}\nonumber\\
      {=} & {f(\bm{x}^{k+1}) + g(\bm{x}^{k+1}) - h(\bm{x}^{k}) - \ip{\nabla_{i_k} h(\bm{x}^{k})}{{x}_{i_k}^{k+1} - {x}_{i_k}^{k}}}\nonumber\\
    =& \hat{\phi}(\bm{x}_{i_k}^{k+1} + \bar{\bm{x}}^{k}_{i},\bm{x}^k)
        = \hat{\phi}_{i_k}(x_{i_k}^{k+1}, \bm{x}^k)  \leq \hat{\phi}_{i_k}(x_{i_k}, \bm{x}^k) \nonumber\\
         = &f(\bar{\bm{x}}_{i_k}^k + \bm{x}_{i_k}) + g(\bm{x}^k) + g_{i_k}(x_{i_k}) - g_{i_k}(x_{i_k}^k) - h(\bm{x}^k) \nonumber\\
        & - \ip{\nabla_{i_k} h(\bm{x}^k)}{x_{i_k} - x_{i_k}^k}. \nonumber
\end{align}
Here, the first inequality holds due to the convexity of $h$, while the second inequality follows from the definition of $x_{i_k}^{k+1}$ in the update rule of \algo. 
Adding $f(\bm{x}^k)$ to both sides and rearranging, we get
\begin{equation}
\begin{aligned}
    f(\bm{x}^k) - f(\bar{\bm{x}}_{i_k}^k + \bm{x}_{i_k}) + g_{i_k}(x_{i_k}^k) - g_{i_k}(x_{i_k}) &+ \ip{\nabla_{i_k} h(\bm{x}^k)}{x_{i_k} - x_{i_k}^k}\nonumber\\
    &\leq \phi(\bm{x}^k) - \phi(\bm{x}^{k+1}).
\end{aligned}
\end{equation}
Since the function $f$ is $L$-smooth, we have
\begin{equation*}
    f(\bar{\bm{x}}_{i_k}^k + \bm{x}_{i_k})
     \leq f(\bm{x}^k) + \ip{\nabla_{i_k} f(\bm{x}^k)}{x_{i_k} - x_{i_k}^k} + \frac{L}{2} \norm{x_{i_k} - x_{i_k}^k}^2. 
\end{equation*}
Combining the last two inequalities, we obtain
\begin{align}\label{eqn:proof-main-step}
     \ip{\nabla_{i_k} f(\bm{x}^k) - \nabla_{i_k} h(\bm{x}^k)}{ x_{i_k}^k - x_{i_k}} + g_{i_k}(x_{i_k}^k) - & g_{i_k}(x_{i_k}) - \frac{L}{2} \norm{x_{i_k} - x_{i_k}^k}^2
    \nonumber\\
    &\leq \phi(\bm{x}^k) - \phi(\bm{x}^{k+1}).
\end{align}

Let us denote by $\mathbb{E}_k$ the conditional expectation with respect to the random selection of $i_k$, given all the random choices in the previous iterations. Then,   
\begin{align}
    \mathbb{E}_{k}\bigg[\ip{&\nabla_{i_k} f(\bm{x}^k) - \nabla_{i_k} h(\bm{x}^k)}{ x_{i_k}^k - x_{i_k}} + g_{i_k}(x_{i_k}^k) - g_{i_k}(x_{i_k}) - \frac{L}{2} \norm{x_{i_k} - x_{i_k}^k}^2\bigg] \nonumber\\
     = \frac{1}{n} &\sum_{i=1}^n \bigg( \ip{\nabla_i f(\bm{x}^k) - \nabla_i h(\bm{x}^k)}{ x_i^k - x_i}  + g_i(x_i^k) - g_i(x_i) - \frac{L}{2} \norm{x_i - x_i^k}^2\bigg) \nonumber\\
     = \frac{1}{n} &\bigg(  \ip{\nabla f(\bm{x}^k) - \nabla h(\bm{x}^k)}{ \bm{x}^k - \bm{x}} + g(\bm{x}^k) - g(\bm{x}) - \frac{L}{2} \norm{\bm{x} - \bm{x}^k}^2 \bigg). \nonumber
\end{align}
Combining this with \eqref{eqn:proof-main-step} gives the following bound, which holds for all $\bm{x} \in \mathcal{M}$:
\begin{align}
    \ip{\nabla f(\bm{x}^k) - \nabla h(\bm{x}^k)}{ \bm{x}^k - \bm{x}} + g(\bm{x}^k) & - g(\bm{x}) - \frac{L}{2} \norm{\bm{x} - \bm{x}^k}^2  \nonumber\\
    & \leq n \phi(\bm{x}^k) - n \expectk{\phi(\bm{x}^{k+1})}.
\end{align}
Now, we maximize this inequality over $\bm{x} \in \mathcal{M}$ to get
\begin{equation*}
    \gap^L_{\mathcal{M}}(\bm{x}^k)
    \leq n \phi(\bm{x}^k) - n \expectk{\phi(\bm{x}^{k+1})}.
\end{equation*}
Then, we take the expectation of both sides over the random choices in all iterations:
\begin{equation}\label{eqn:proof_mai_thm_10}
    \expect{\gap^L_{\mathcal{M}}(\bm{x}^k)}
    \leq n \expect{\phi(\bm{x}^k)} - n \expect{\phi(\bm{x}^{k+1})}.
\end{equation}
Finally, we average this inequality over $k=1,\ldots,K$:
\begin{align}
    \frac{1}{K} \sum_{k=1}^K \expect{\gap^L_{\mathcal{M}}(\bm{x}^k)}
    \leq \frac{n}{K} \left(\phi(\bm{x}^1) - \expect{\phi(\bm{x}^{K+1})})\right) 
    \leq \frac{n}{K} \left(\phi(\bm{x}^1) - \phi^\star\right).
\end{align}
We complete the proof by noting that the minimum of $\gap^L_{\mathcal{M}}(\bm{x}^k)$ over $k=1,\ldots,K$ is smaller than or equal to the average gap.
\end{proof}

 {
\begin{remark}
    The smoothness assumption of $f$ is not required by \Cref{alg:BCDC} itself, but only for the analysis. 
    In particular, smoothness allows us to bound each block update of $f$ by a quadratic model,
    \begin{equation*}
        \langle \nabla_{i} f(\bm{x}^k), x_{i} - x_{i}^k \rangle + \tfrac{L}{2}\|x_{i} -     x_{i}^k\|^2
    \;\leq\; f(\bm{x}^k) - f(\bar{\bm{x}}_{i}^k + \bm{x}_{i}),
    \end{equation*}
    which effectively gives a block-separable structure for the analysis. 
    This step is essential in our proof but does not play any role in the algorithm itself.
\end{remark}
}

\begin{remark}
    The rate in \Cref{thm:theorem-gap} is consistent with existing results for CD methods in convex optimization. 
    For instance, Theorem~1 in \cite{wright2015coordinate} shows that $\mathbb{E} [f(\bm{x}^k)] - f^\star \leq \frac{2nL\norm{\bm{x}^1-\bm{x}_\star}^2}{k}$ for the CD method in unconstrained smooth convex minimization. 
    In terms of first-order stationarity condition, this implies 
    \begin{equation*}
                \expect{\norm{\nabla f(\bm{x}^k)}^2} 
                \leq 2 L \expect{f(\bm{x}^k) - f^\star} 
                \leq \frac{4nL^2\norm{\bm{x}^1-\bm{x}_\star}^2}{k}.
    \end{equation*}
    \Cref{thm:theorem-gap} yields a comparable result, since in this setting $\phi = f$ satisfies $f(\bm{x}^1) - f(\bm{x}^\star) \leq \frac{L}{2}\norm{\bm{x}^1-\bm{x}^\star}^2$, and the gap takes the form of $\gap^L(\bm{x}^k) = \tfrac{1}{2L} \norm{\nabla f(\bm{x}^k)}^2$. 
    Thus, we obtain 
    \begin{equation*}
        \min_{k \in \{1,\ldots,K\}} \expect{\norm{\nabla f(\bm{x}^k)}^2}
        \leq \frac{nL^2\norm{\bm{x}^1-\bm{x}_\star}^2}{K}.
    \end{equation*}
\end{remark}

Identifying the gap function in \Cref{def:gap} as a measure of first-order stationarity is an  {important technical component} of our analysis. 
Notably,  {many} standard  {stationarity} measures are not suitable for nonsmooth nonconvex optimization problems, motivating recent efforts to explore and reassess alternative definitions  \cite{zhang2020complexity,kornowski2021oracle}. 
For example, the distance measure
\begin{equation*}
    \mathrm{dist}(0,\nabla f(\bm{y}) + \partial g(\bm{y}) - \nabla h(\bm{y}) + \mathcal{N}_{\mathcal{M}}(\bm{y})),
\end{equation*}
or the `Frank-Wolfe gap' using a subgradient $\bm{u} \in \partial g(\bm{y})$
\begin{equation*}
    \max_{\bm{x} \in \mathcal{M}} ~  \ip{\nabla f(\bm{y}) + \bm{u} - \nabla h(\bm{y})}{ \bm{y} - \bm{x}}   
\end{equation*}
are intractable, meaning that no algorithm can guarantee finding a point satisfying $\epsilon$-closeness in these notions within a finite number of iterations. 

Our results  {suggest} that the challenge of identifying appropriate stationarity measures stems from the nonsmooth convex term, while the nonsmooth concave term remains benign. 
Although it may initially seem surprising, this can be explained by the fact that the concave term does not invalidate the smoothness-based quadratic upper bound, since
\begin{align*}
    f(\bm{y}) & \leq f(\bm{x}) + \ip{\nabla f(\bm{x})}{\bm{y} - \bm{x}} + \frac{L}{2}\norm{\bm{x}-\bm{y}}^2 \\
    - h(\bm{y}) & \leq - h(\bm{x}) - \ip{\nabla h(\bm{x})}{\bm{y} - \bm{x}}
\end{align*}
for all $\bm{x}$ and $\bm{y}$. 
Summing these two inequalities, we see that $f(\bm{y})-h(\bm{y})$ behaves similarly to a smooth function in the analysis, as the quadratic upper bound is  {often} the only property required from the smoothness assumption.

 {

\subsubsection{Guarantees under Non-differentiable Concave Terms}

When $h$ is non-differentiable, we replace $\nabla h(\bm{y})$ in the definition of the gap by a subgradient $\bm{v} \in \partial h(\bm{y})$, and define
\begin{equation*}
    \gap^L_{\mathcal{M}}(\bm{y}) = \max_{\bm{x} \in \mathcal{M}} \min_{\bm{v} \in \partial h(\bm{y})}  \big\{ \ip{\nabla f(\bm{y})  - \bm{v}}{ \bm{y} - \bm{x}} + g(\bm{y}) - g(\bm{x}) - \frac{L}{2} \norm{\bm{x} - \bm{y}}^2 \big\}.  
\end{equation*}
This choice ensures the gap remains well-defined, as it is sufficient that the condition holds for one subgradient of $h$ at $\bm y$. 
When $h$ is differentiable, this definition reduces to \Cref{def:gap}.

The result below extends \Cref{lem:gap} to the setting where $h$ is non-differentiable; the proof follows similar arguments and is deferred to the appendices

\begin{lemma} \label{lem:gap-nonsmooth}
Let $\mathcal{M} \subseteq \mathbb{R}^m$ be a closed convex set, and let $f,g,h : \mathcal{M} \to \mathbb{R}$ be lower-semicontinuous convex functions, where $f$ is $L$-smooth. 
Then $\gap^L_{\mathcal{M}}(\bm{y}) \geq 0$ for all $\bm{y}\in\mathcal{M}$, and $\gap^L_{\mathcal{M}}(\bm{y}) = 0$ if and only if $\bm{y}$ is a critical point of problem~\eqref{eqn:main}, defined by the condition
\begin{equation}\label{eqn:criticality}
    \big(\nabla f(\bm{y}) + \partial g(\bm{y}) + \mathcal{N}_{\mathcal{M}}(\bm{y})\big) \cap \partial h(\bm{y}) \neq \varnothing.
\end{equation}
\end{lemma}

\Cref{thm:theorem-gap} continues to hold in the non-differentiable setting. 
The proof follows identically, except that $\nabla_{i_k} h(\bm{x}^k)$ replaced by the $i_k$-th component (or block) of a subgradient $\bm{v}^k \in \partial h(\bm{x}^k)$, denoted by $v^k_{i_k}$. 
Any subgradient can be used, since the gap is defined via a minimization over the subdifferential set. 
The proof proceeds without further modification.
}

 {

\subsubsection{Faster Rates Under a Generalized Polyak-Łojasiewicz Condition}

In the same spirit as \citep{Karimi2016LinearCO}, it is possible to prove a linear convergence rate for \algo, conditioned on a generalized Polyak–Łojasiewicz (PL) inequality defined in terms of our gap function. 
\begin{theorem}
    Assume that $\phi$ satisfies the generalized PL inequality, i.e., there exists $\mu \in (0,L]$ such that 
    \begin{equation}\label{eqn:genPL}
    \phi(\bm x) - \phi(\bm x^*) \le \frac{L}{\mu} \gap_{\mathcal{M}}^L(\bm x)
        \quad \text{for all } \bm{x} \in \mathcal{M}.
    \end{equation} 
    Suppose $\bm{x}^1, \ldots, \bm{x}^K$ is a sequence generated by \algo for solving problem~\eqref{eqn:main}. 
    Then, the following bound holds:
    \begin{equation*}
        \expect{\phi(\bm{x}^{k+1})} - \phi(\bm x^*)
        \leq \left(1 - \frac{\mu}{nL}\right)^{k+1}\left(\expect{\phi(\bm x^0)} - \phi(\bm x^*)\right).
    \end{equation*} 
\end{theorem}
\begin{proof}
    The proof starts by following the same steps as that of \Cref{thm:theorem-gap}. 
    Then, rearranging \eqref{eqn:proof_mai_thm_10}, subtracting $\phi(\bm x^*)$, and applying \eqref{eqn:genPL}, we obtain
    \begin{align*}
        \expect{\phi(\bm{x}^{k+1})} - \phi(\bm x^*) 
        \leq & \expect{\phi(\bm{x}^{k})} - \phi(\bm x^*) 
        -\frac{1}{n}\expect{\gap_{\mathcal{M}}^L(\bm x^k)} \\
         \leq & \left(1 - \frac{\mu}{nL}\right)\left(\expect{\phi(\bm x^k)} - \phi(\bm x^*)\right).
    \end{align*}   
    Iterating this inequality concludes the proof.
\end{proof}

}

\section{Block EM Algorithm}\label{sec:EM}

The Expectation-Maximization (EM) algorithm is a cornerstone in statistical analysis for finding maximum likelihood estimates in the presence of incomplete data; widely used in applications such as mixture models, latent variable models, data imputation, and clustering  \cite{dempster1977maximum,mclachlan2007algorithm}. 
EM shares strong connections with DCA \cite{yuille2003concave,lethi30years}. 
In this section, we leverage these connections to develop a block EM algorithm based on our proposed \algo. 

Consider negative log-likelihood minimization problem:
\begin{equation} \label{eqn:ML}
    \min_{\bm{\theta}} ~ \mathcal{L}(\bm{\theta}) := - \sum_{\bm{x} \in \mathcal{X}}\log P(\bm{x}|{\bm{\theta}}). %
\end{equation}
Note the change in notation: here, $\bm{x}$ denotes data points, while the decision variable $\bm{\theta} \in \Theta \subseteq \R^m$ represents the parameters of the underlying distribution. 
The dataset $\mathcal{X}$ has support size $N$, and $\mathcal{L}$ is the negative log-likelihood function. 
The distribution $P(\bm{x}|{\bm{\theta}})$ is defined through a hidden variable $\bm{y}$:
\begin{equation*}
    P(\bm{x}|{\bm{\theta}}) = \sum_{\bm{y}} P(\bm{x},\bm{y}|{\bm{\theta}}),
\end{equation*}
where $P(\bm{x},\bm{y}|{\bm{\theta}})$ denotes the complete-data likelihood. 
The presence of hidden variables complicate \eqref{eqn:ML}, making standard optimization techniques inapplicable. 

The EM algorithm, for solving this problem, iteratively alternates between two steps -- Expectation (E-step) and Maximization (M-step) -- to improve the likelihood of the model parameters until convergence:
\begin{align}
    Q(\bm \theta;\bm{\theta}^k) & ~=~ \mathbb{E}_{P(\bm{y}|\bm{x},{\bm{\theta}^k} {)}}\left[ \log P(\bm{x},\bm{y}|{\bm{\theta}}) \right] \tag{E-step}\\
    \bm{\theta}^{k+1} & ~=~ \underset{\bm{\theta}\in\Theta}{\arg\max} ~ Q(\bm{\theta};\bm{\theta}^k) \tag{M-step}
\end{align}
Here, $P(\bm{y}|\bm{x},{\bm{\theta}^k})$ is the posterior distribution of the latent variables given the observed data and the current parameter estimate $\bm{\theta}^k$. 
In the E-step, the expected value of the complete-data log-likelihood is computed with respect to the conditional distribution of the latent variables given the observed data and the current parameter estimates. 
In the M-step, this expected log-likelihood is maximized to update the parameters. 

For the broad class of natural exponential family distributions, EM is a specific instance of DCA. 

\begin{definition}\label{def:expo-family}
    The natural exponential family of distributions with a parameter $\bm{\theta}\in\Theta$ is characterized by the following probability mass (or density) function:
    \begin{equation} \label{eqn:nat-expo-dist}
        P(\bm{x},\bm{y}|{\bm{\theta}}) = \frac{\varphi(\bm{x},\bm{y}) e^{\ip{\bm{\theta}}{T(\bm{x},\bm{y})}}}{\sum_{\bm{x},\bm{y}} \varphi(\bm{x},\bm{y}) e^{\ip{\bm{\theta}}{T(\bm{x},\bm{y})}}},
    \end{equation}
    where $\varphi$ is a positive function (base measure) and $T$ denotes the sufficient statistic. %
\end{definition}

It is easy to see that problem \eqref{eqn:ML} with a natural exponential family distribution \eqref{eqn:nat-expo-dist} can be written as a DC program, since the log-likelihood function can be decomposed as $\mathcal{L} \!=\! f \!-\! h$, where the terms are given by
\begin{equation} \label{eqn:functions}
\begin{aligned}
    f(\bm \theta) & = N\log\Big(\sum_{\bm{x},\bm{y}} \varphi(\bm{x},\bm{y}) e^{\ip{\bm{\theta}}{T(\bm{x},\bm{y})}} \Big),\\ 
    h(\bm \theta) & = \sum_{x\in\mathcal{X}}\log\Big(\sum_{\bm y} \varphi(\bm{x},\bm{y}) e^{\ip{\bm{\theta}}{T(\bm{x},\bm{y})}} \Big).
\end{aligned}
\end{equation}
Both terms are convex and smooth, owing to their log-sum-exp structure. 

Applying DCA to this problem leads to the following update rule: 
\begin{align}\label{eqn:em_deriv}
        \bm{\theta}^{k+1} \in \argmin_{ \bm{\theta}} ~~ &N\log\left(\sum_{\bm{x}, \bm{y}} \varphi(\bm{x},\bm{y})e^{\ip{\bm{\theta}}{T(\bm{x}, \bm{y})}}\right)-  \sum_{x\in\mathcal{X}} \frac{\sum_{\bm{y}} \varphi(\bm{x},\bm{y})\ip{\bm{\theta}}{T(\bm{x}, \bm{y})}e^{\ip{\bm{\theta}^k}{T(\bm{x}, \bm{y})}}}{\sum_{\bm{y}} \varphi(\bm{x},\bm{y})e^{\ip{\bm{\theta}^k}{T(\bm{x}, \bm{y})}}}. 
\end{align}
 {Note that the dataset size $N$ can be expressed as: 
\begin{equation*}
    N = \sum_{x\in\mathcal{X},\bm{y}}\frac{\varphi(\bm{x},\bm{y})e^{\ip{\bm{\theta}^k}{T(\bm{x}, \bm{y})}}}{\sum_{\bm{y}}\varphi(\bm{x},\bm{y})e^{\ip{\bm{\theta}^k}{T(\bm{x}, \bm{y})}}}.
\end{equation*}
Also, we can write $\ip{\bm{\theta}}{T(\bm{x}, \bm{y})}  = \log(e^{\ip{\bm{\theta}}{T(\bm{x}, \bm{y})}})$, and add $-N\log(\varphi(\bm{x},\bm{y}))$ to~\eqref{eqn:em_deriv}, as it is constant with respect to $\bm \theta$. 
This gives:
\begin{align}
        \bm{\theta}^{k+1} \in \argmin_{ \bm{\theta}} ~~ &\sum_{x\in\mathcal{X},\bm{y}}\frac{\varphi(\bm{x},\bm{y})e^{\ip{\bm{\theta}^k}{T(\bm{x}, \bm{y})}}}{\sum_{\bm{y}}\varphi(\bm{x},\bm{y})e^{\ip{\bm{\theta}^k}{T(\bm{x}, \bm{y})}}}\log\left(\sum_{\bm{x}, \bm{y}} \varphi(\bm{x},\bm{y})e^{\ip{\bm{\theta}}{T(\bm{x}, \bm{y})}}\right)\nonumber\\ 
        &-  \sum_{x\in\mathcal{X},\bm{y}} \frac{ \varphi(\bm{x},\bm{y})e^{\ip{\bm{\theta}^k}{T(\bm{x}, \bm{y})}}}{\sum_{\bm{y}} \varphi(\bm{x},\bm{y})e^{\ip{\bm{\theta}^k}{T(\bm{x}, \bm{y})}}}\log\left(\varphi(\bm{x},\bm{y})e^{\ip{\bm{\theta}}{T(\bm{x}, \bm{y})}}\right),\nonumber
\end{align}
or equivalently,
}
\begin{align}\label{eqn:dc-em}
    \bm{\theta}^{k+1} \in \argmin_{\bm{\theta}} \sum_{x\in\mathcal{X}}\sum_{\bm{y}}\frac{\varphi(\bm{x},\bm{y})e^{\ip{\bm{\theta}^k}{T(\bm{x}, \bm{y})}}}{\sum_{\bm{y}}\varphi(\bm{x},\bm{y})e^{\ip{\bm{\theta} {^k}}{T(\bm{x}, \bm{y})}}}
    \Bigg[\log\left(\frac{\sum_{\bm{x}, \bm{y}} \varphi(\bm{x},\bm{y})e^{\ip{\bm{\theta}}{T(\bm{x}, \bm{y})}}}{\varphi(\bm{x},\bm{y})e^{\ip{\bm{\theta}}{T(\bm{x}, \bm{y})}}}\right)
     \Bigg],
\end{align}
 {which} is exactly the EM algorithm for the exponential family. 
 {This can be seen through \Cref{def:expo-family} and noting that
\begin{align*}
    P(\bm{x},\bm{y}|{\bm{\theta}}) = \frac{\varphi(\bm{x},\bm{y}) e^{\ip{\bm{\theta}}{T(\bm{x},\bm{y})}}}{\sum_{\bm{x},\bm{y}} \varphi(\bm{x},\bm{y}) e^{\ip{\bm{\theta}}{T(\bm{x},\bm{y})}}}, 
    \quad 
    P(\bm{y}|\bm{x},{\bm{\theta}^k}) = \frac{\varphi(\bm{x},\bm{y}) e^{\ip{\bm{\theta}^k}{T(\bm{x},\bm{y})}}}{\sum_{\bm{y}} \varphi(\bm{x},\bm{y}) e^{\ip{\bm{\theta}^k}{T(\bm{x},\bm{y})}}}.
\end{align*}
} 

We then design the \algoem method by addressing the same problem using \algo. 
The resulting algorithm is presented in \Cref{alg:BCEM}. 
Notably, we can transfer the guarantees established in \Cref{thm:theorem-gap} to provide non-asymptotic convergence results for \algoem.

\begin{algorithm*}[tb]
   \caption{Block EM Algorithm}
   \label{alg:BCEM} 
\begin{algorithmic}
   \STATE {\bfseries Input:}  total iterations $K$, initialize the probability distribution $P$ using vector $\bm{\theta}^{(0)}$ ($P$ belongs to the exponential family), and number of blocks $n$
   \FOR{ $k=1$ {\bfseries to} $K$}
        \STATE Randomly choose $i_k$ in $[1,...,n]$ with uniform distribution
        \STATE Update $ \hat{P}^{k+1}(\bm{y}) =\frac{P(\bm{x},\bm{y}|\bm{\theta}^k)}{\sum_{\bm{y}} P(\bm{x},\bm{y}|\bm{\theta}^k)}$
        \STATE Update ${\theta}_{i_k}^{k+1} = \underset{\theta_{i_k}}{\mathrm{arg\,min}} -\sum_{\bm{x}\in\mathcal{X},\bm{y}}\hat{P}^{k+1}(\bm{y})\log P(\bm{x},\bm{y}|\theta^k_{1},\ldots,\theta_{i_k},\ldots,\theta^k_{n}) $
        \STATE Set $\bm \theta^{k+1} = \bar{\bm \theta}_{i_k}^{k}+ \bm \theta_{i_k}^{k+1}$
   \ENDFOR
   \STATE {\bfseries Output: $\bm{\theta}^{K+1}$}
\end{algorithmic}
\end{algorithm*}

\begin{corollary} \label{cor:block-em}
    Suppose there exists a sequence ${\bm{\theta}^k}, k =1\ldots,K$ generated by Block EM algorithm. 
    Then, 
    \begin{align*}
       \min_{k\in\{1,\ldots,K\}} \expect{\norm{\nabla \mathcal{L}(\bm{\theta}^k)}^2}
       \leq \frac{2nL}{K} \big(\mathcal{L}(\bm{\theta}^1)-\mathcal{L}(\bm{\theta}^\star)\big).
    \end{align*}
\end{corollary}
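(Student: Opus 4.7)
The plan is to derive this corollary as a direct consequence of Theorem \ref{thm:theorem-gap}, since the paper has already established that EM is DCA and that \algoem is constructed by running \algo on the exponential family MLE. So the task reduces to (i) identifying the specific DC structure, (ii) specializing the gap to that structure, and (iii) plugging into the main theorem.

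First I would set up the instantiation. For the MLE problem \eqref{eqn:ML} with an exponential family distribution, the decomposition $\mathcal{L} = f - h$ given in \eqref{eqn:functions} matches the template \eqref{eqn:main} with $g \equiv 0$ and $\mathcal{M} = \mathbb{R}^m$ (the parameter space $\Theta$ is treated as open/unconstrained). Both $f$ and $h$ are convex and smooth by their log-sum-exp structure, so $f$ is $L$-smooth for some finite $L$. In this setting, running \algo on $\phi = f - h$ produces exactly the iterates of \Cref{alg:BCEM}, so the hypotheses of \Cref{thm:theorem-gap} are met with $\phi = \mathcal{L}$.

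Next I would simplify the stationarity measure from \Cref{def:gap} in this unconstrained, $g$-free setting:
\begin{equation*}
    \gap^L_{\mathbb{R}^m}(\bm{\theta}) = \max_{\bm{x} \in \mathbb{R}^m} \Big\{ \ip{\nabla \mathcal{L}(\bm{\theta})}{\bm{\theta} - \bm{x}} - \tfrac{L}{2}\norm{\bm{x} - \bm{\theta}}^2 \Big\}.
\end{equation*}
Substituting $\bm{d} = \bm{\theta} - \bm{x}$ gives an unconstrained strongly concave quadratic in $\bm{d}$, maximized at $\bm{d}^\star = \nabla \mathcal{L}(\bm{\theta})/L$. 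Plugging back yields the clean identity
\begin{equation*}
    \gap^L_{\mathbb{R}^m}(\bm{\theta}) = \tfrac{1}{2L}\norm{\nabla \mathcal{L}(\bm{\theta})}^2,
\end{equation*}
which is exactly the form already used in the remark following \Cref{thm:theorem-gap}.

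Finally I would apply \Cref{thm:theorem-gap} to the \algoem iterates and multiply through by $2L$:
\begin{equation*}
    \min_{k\in\{1,\ldots,K\}} \expect{\tfrac{1}{2L}\norm{\nabla \mathcal{L}(\bm{\theta}^k)}^2} \leq \tfrac{n}{K}\big(\mathcal{L}(\bm{\theta}^1) - \mathcal{L}(\bm{\theta}^\star)\big),
\end{equation*}
which rearranges to the claimed bound. The only conceptual obstacle, rather than a computational one, is the justification that \Cref{alg:BCEM} really coincides with \algo applied to the decomposition \eqref{eqn:functions}; this is essentially the block-coordinate analogue of the derivation in \eqref{eqn:em_deriv} and follows by the same algebraic manipulation, restricted to the coordinate block $i_k$.
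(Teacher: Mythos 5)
Your proposal is correct and follows essentially the same route as the paper: specialize \Cref{def:gap} to the unconstrained, $g\equiv 0$ setting, maximize the concave quadratic to get $\gap^L(\bm{\theta}^k)=\tfrac{1}{2L}\norm{\nabla\mathcal{L}(\bm{\theta}^k)}^2$, and substitute into \Cref{thm:theorem-gap}. Your added remark that one must check \Cref{alg:BCEM} coincides with \algo applied to the decomposition \eqref{eqn:functions} is a point the paper handles implicitly by construction, so it is a welcome clarification rather than a deviation.
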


\begin{proof}
    In this setting, the gap function reduces to
    \begin{align*}
        &\max_{\bm{\theta}} ~ \ip{\underbrace{\nabla f(\bm{\theta^k})-\nabla h(\bm{\theta}^k)}_{\nabla \mathcal{L}(\bm{\theta}^k)}}{\bm{\theta}^k - \bm{\theta}} - \frac{L}{2}\norm{\bm{\theta}^k - \bm{\theta}}^2.
    \end{align*}
    The solution to this max problem is $\bm{\theta} = \bm{\theta}^k - \frac{1}{L}\nabla \mathcal{L}(\bm{\theta})$, leading to $\gap^L(\bm{\theta}^k) = \frac{1}{2L} \norm{\nabla \mathcal{L}(\bm{\theta}^k)}^2$. 
    Substituting this into \Cref{thm:theorem-gap} yields the desired result. 
\end{proof}

\begin{remark}
    The results in \Cref{cor:block-em} are consistent with the $\mathcal{O}(1/k)$ convergence rate guarantee for the standard EM algorithm, as provided by  \cite{kumar2017convergence}. 
\end{remark}

\begin{remark}
    The standard EM algorithm achieves an $\mathcal{O}(1/k)$ convergence rate in Kullback–Leibler divergence, as shown in \cite{Kunstner2020HomeomorphicInvarianceOE}.
    It is worth noting that the Lipschitz constant and the Euclidean distance generating function are not utilized in \algo; they appear in the analysis only through the definition of the gap function. 
    Consequently, we believe our analysis can be extended to the general setting of Bregman divergence. 
\end{remark}

\section{Numerical Experiments}

In this section, we demonstrate the empirical performance of \algo through numerical experiments in logistic regression with nonconvex sparsity regularization and nonconvex quadratic programming with negative $\ell_1$ regularization. 
Additionally, we examine the performance of \algoem on one-bit one-bit MIMO channel signal recovery problem in \ref{app:Mimo}. 
All experiments were conducted using MATLAB R2023b on a Intel(R) Xeon(R) Silver 4210R CPU @ 2.40GHz. 

\subsection{Logistic Regression with Nonconvex Sparsity Regularization}

The first experiment replicates the logistic regression with nonconvex sparsity regularization in \cite{deng2020efficiency}:
\begin{align}\label{eqn:classification-reg-lK}
    \min_{\bm x\in \mathbb{R}^m} ~~ \frac{1}{N}\sum_{i=1}^N \log(1+e^{-b_i\langle \bm a_i,\bm x\rangle}) + \lambda(\|\bm x\|_1 - \|\bm x\|_Q)
\end{align}
where $\|\bm x\|_Q$ denotes the largest $Q$-norm {, defined as the sum of the absolute values of the $Q$ largest components of $\bm{x}$,} and $(\bm a_i,b_i)$ represent the datapoints. 

Logistic loss is convex and smooth, and we denote by $L$ its smoothness constant. 
We compare the performance of DCA and BDCA, using the following decomposition of the objective function:
\begin{align*}
    f(\bm x)= \frac{L}{2}\|\bm x\|^2, ~~ &g(\bm x) = \lambda \|\bm x\|_1, ~~ \\
    h(\bm x) = \frac{L}{2}\|\bm x\|^2 - \frac{1}{N}\sum_{i=1}^N &\log(1+e^{-b_i\langle \bm a_i,\bm x\rangle}) + \lambda \|\bm x\|_Q. 
\end{align*}
which results in closed form DCA subproblems of the proximal (coordinate) descent type updates. 

We conduct experiments with the \textsc{Rcv1} ($N=20242,m=47236,s=0.5\%$) and \textsc{Real-Sim} ($N=702309,m=20958,s=0.25\%$) datasets from the LIBSVM library \cite{10.1145/1961189.1961199}. 
We set $\lambda = 0.1/m$ and $\lambda=10/m$, and use blocks of size $1000$.  

The results are shown in \Cref{fig:logistic_rho1,fig:logistic_rho10}. 
In this example, \algo and DCA perform similarly in terms of the number of full passes (iteration $\times$ block-size$/m$) for both the objective and the gap function (as defined in \Cref{def:gap}). 
However, \algo avoids solving full-scale subproblems, making it suitable for devices with limited computational power. 
It also offers greater potential for parallelization through asynchronous updates, which could be a valuable direction for future research.

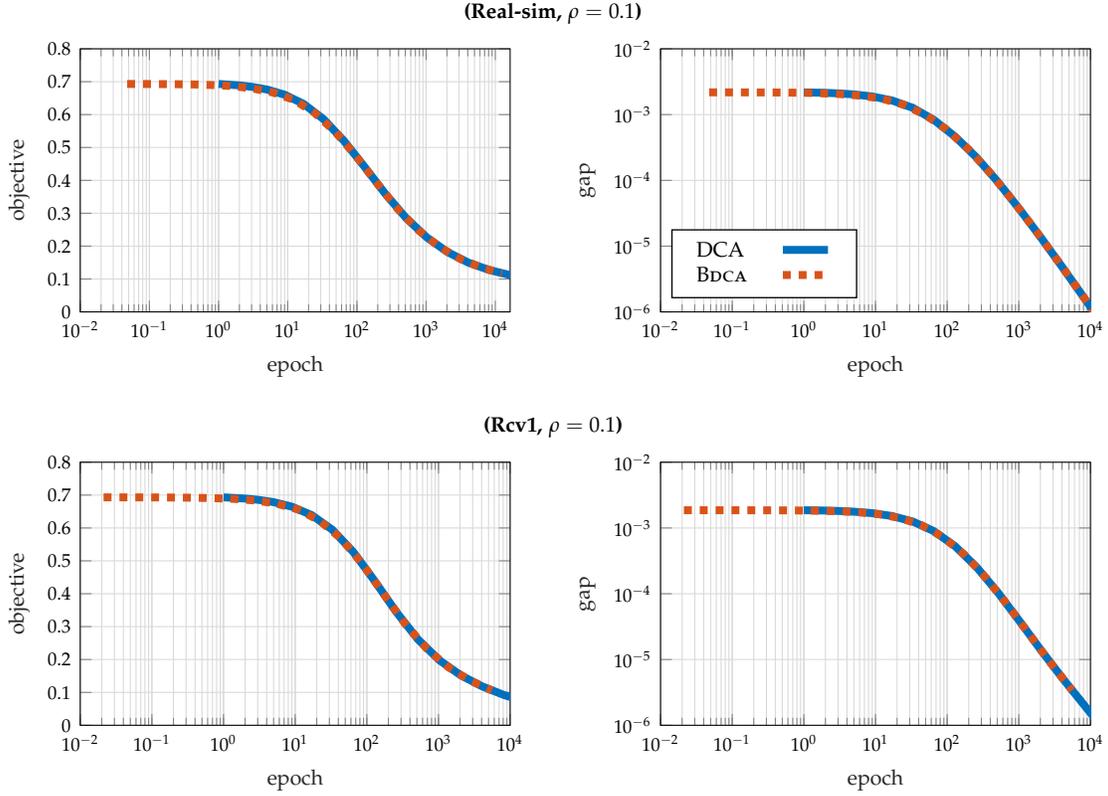
\begin{figure*}[!t]
    \centering
    \begin{tikzpicture}
    
    \begin{groupplot}[group style={group size=2 by 2 , horizontal sep = 2cm , vertical sep = 2.0cm},        
        width = 0.36\textwidth, 
        height = 3.5cm,            
        name = {figure_BDCA}
        grid = both, 
        grid style = {gray!30},        
        tick label style = {font=\scriptsize}, 
        name = figure1,
        ]

    \nextgroupplot[
scale only axis,
xmode=log,
xmin=0.01,
xmax=16385,
xminorticks=true,
ymode=linear,
ymin=0.0,
ymax=0.8,
ytick = {0,0.1,0.2,0.3,0.4,0.5,0.6,0.7,0.8},
yminorticks=true,
ylabel style={font=\footnotesize\color{white!15!black}},
xlabel style={font=\footnotesize\color{white!15!black}},
ylabel={objective},
xlabel={epoch},
axis background/.style={fill=white},
title style={font=\footnotesize\bfseries},
title={(\textsc{Real-sim}, $\rho = 0.1$)},
title style={at={(axis description cs:1.1,1)},anchor=south},
xmajorgrids,
xminorgrids,
ymajorgrids,
yminorgrids,
]
\addplot [color=mycolor1, line width=3.0pt, forget plot]
  table[row sep=crcr]{%
1	0.693147180560083\\
2	0.688806212607153\\
3	0.684541565774823\\
5	0.676244164783408\\
9	0.66052877356271\\
17	0.632255915718192\\
33	0.585881228934463\\
65	0.519971496783707\\
129	0.440847959756068\\
257	0.35914160674784\\
513	0.285567880995717\\
1025	0.226307028658688\\
2049	0.181717280425101\\
4097	0.149644483426731\\
8193	0.127302533837499\\
16385	0.111943368730555\\
};\label{fig:obj_data1_rho01_DCA}
\addplot [color=mycolor3,  dashed ,line width=3.0pt, forget plot]
  table[row sep=crcr]{%
0.0477122000095424	0.693147180560083\\
0.0954244000190849	0.693005514695953\\
0.143136600028627	0.692723106211841\\
0.238561000047712	0.692395143318214\\
0.429409800085882	0.691766760989692\\
0.811107400162221	0.689828749211011\\
1.5745026003149	0.686821879646128\\
3.10129300062026	0.679400942095649\\
6.15487380123097	0.666992660850313\\
12.2620354024524	0.644820240110093\\
24.4763586048953	0.605161322946766\\
48.905005009781	0.546856227417871\\
97.7622978195525	0.473088370676823\\
195.476883439095	0.390853000870039\\
390.906054678181	0.312347627834553\\
781.764397156353	0.24754205155639\\
1563.4810821127	0.197425041333102\\
3126.91445202538	0.160833137419523\\
6253.78119185076	0.135036779728811\\
12507.5146715015	0.117236366040403\\
};\label{fig:obj_data1_rho01_BDCA}

\nextgroupplot[
scale only axis,
xmode=log,
xmin=0.01,
xmax=10000,
xminorticks=true,
ymode=log,
ymin=1e-06,
ymax=1e-02,
ytick = {1e-2,1e-3,1e-4,1e-5,1e-6},
yminorticks=true,
ylabel style={font=\footnotesize\color{white!15!black}},
xlabel style={font=\footnotesize\color{white!15!black}},
ylabel={gap},
xlabel={epoch},
axis background/.style={fill=white},
title style={font=\footnotesize\bfseries},
xmajorgrids,
xminorgrids,
ymajorgrids,
yminorgrids,
]

\addplot [color=mycolor1, line width=3.0pt, forget plot]
  table[row sep=crcr]{%
1	0.00217958908155269\\
2	0.00214216935345935\\
3	0.00210300660781898\\
5	0.00202731125398395\\
9	0.00188603446815272\\
17	0.00164046647337939\\
33	0.0012697509215325\\
65	0.00083313153204813\\
129	0.000462885748443134\\
257	0.000222997911316463\\
513	9.41526566207425e-05\\
1025	3.63836581514416e-05\\
2049	1.33419127306269e-05\\
4097	4.70591785785392e-06\\
8193	1.62300296509055e-06\\
16385	5.54287676658968e-07\\
};\label{fig:gap_data1_rho01_DCA}
\addplot [color=mycolor3,  dashed ,line width=3.0pt, forget plot]
  table[row sep=crcr]{%
0.0477122000095424	0.00217917596921873\\
0.0954244000190849	0.00217855534937874\\
0.143136600028627	0.00217614978445663\\
0.238561000047712	0.00217330642767161\\
0.429409800085882	0.00216783827778125\\
0.811107400162221	0.00215021643098079\\
1.5745026003149	0.00212195105946635\\
3.10129300062026	0.00205470064600956\\
6.15487380123097	0.00194072195492196\\
12.2620354024524	0.00174679473130567\\
24.4763586048953	0.00141716055617524\\
48.905005009781	0.000996914033459384\\
97.7622978195525	0.000596215846978841\\
195.476883439095	0.000302741389056948\\
390.906054678181	0.000132807771613805\\
781.764397156353	5.30687840806115e-05\\
1563.4810821127	1.982191043173e-05\\
3126.91445202538	7.09390512039063e-06\\
6253.78119185076	2.46154989834893e-06\\
12507.5146715015	8.45140893038108e-07\\
};\label{fig:gap_data1_rho01_BDCA}

\nextgroupplot[
scale only axis,
xmode=log,
xmin=0.01,
xmax=10000,
xminorticks=true,
ymode=linear,
ymin=0.0,
ymax=0.8,
ytick = {0,0.1,0.2,0.3,0.4,0.5,0.6,0.7,0.8},
yminorticks=true,
ylabel style={font=\footnotesize\color{white!15!black}},
xlabel style={font=\footnotesize\color{white!15!black}},
ylabel={objective},
xlabel={epoch},
axis background/.style={fill=white},
title style={font=\footnotesize\bfseries},
title={(\textsc{Rcv1}, $\rho = 0.1$)},
title style={at={(axis description cs:1.1,1)},anchor=south},
xmajorgrids,
xminorgrids,
ymajorgrids,
yminorgrids,
]
\addplot [color=mycolor1, line width=3.0pt, forget plot]
  table[row sep=crcr]{%
1	0.693147180559993\\
2	0.689434477023045\\
3	0.685767118215703\\
5	0.678574330038491\\
9	0.664735629463136\\
17	0.639085575356749\\
33	0.594737613357\\
65	0.52662104063738\\
129	0.438434750537541\\
257	0.344778349422547\\
513	0.262010407153017\\
1025	0.198100233545114\\
2049	0.151787825155577\\
4097	0.117974259765671\\
8193	0.0920768397459965\\
16385	0.071646922638925\\
};\label{fig:obj_data2_rho01_DCA}
\addplot [color=mycolor3,  dashed ,line width=3.0pt, forget plot]
  table[row sep=crcr]{%
0.0211702938436786	0.693147180559993\\
0.0423405876873571	0.693088567612289\\
0.0635108815310357	0.693016403705638\\
0.105851469218393	0.692893461319207\\
0.190532644593107	0.692565447139298\\
0.359894995342535	0.692077629099954\\
0.698619696841392	0.690736514591579\\
1.37606909983911	0.688461918452251\\
2.73096790583453	0.683615819615189\\
5.44076551782539	0.674247003442057\\
10.8603607418071	0.656504709972096\\
21.6995511897705	0.622822320463445\\
43.3779320856974	0.568522868035974\\
86.734693877551	0.490868862040112\\
173.448217461258	0.39774002392606\\
346.875264628673	0.306727872370331\\
693.729358963502	0.231786033574056\\
1387.43754763316	0.175870657329895\\
2774.85392497248	0.135732707503978\\
5549.68667965111	0.105826149865019\\
};\label{fig:obj_data2_rho01_BDCA}

\nextgroupplot[scale only axis,
xmode=log,
xmin=0.01,
xmax=10000,
xminorticks=true,
ymode=log,
ymin=1e-6,
ymax=1e-2,
ytick = {1e-2,1e-3,1e-4,1e-5,1e-6},
ylabel style={font=\footnotesize\color{white!15!black}},
xlabel style={font=\footnotesize\color{white!15!black}},
ylabel={gap},
xlabel={epoch},
name = figure1,
axis background/.style={fill=white},
title style={font=\footnotesize\bfseries},
xmajorgrids,
xminorgrids,
ymajorgrids,
yminorgrids,
]
\addplot [color=mycolor1,  line width=3.0pt, forget plot]
  table[row sep=crcr]{%
1	0.00186098005326422\\
2	0.00183967168157489\\
3	0.00181579673612797\\
5	0.00176917529347744\\
9	0.00168034835383516\\
17	0.00151942501857652\\
33	0.00125591499477179\\
65	0.000897350352642629\\
129	0.000528463544956585\\
257	0.000254563829982507\\
513	0.000103848587035105\\
1025	3.82990448618677e-05\\
2049	1.37912213148875e-05\\
4097	5.14920517296363e-06\\
8193	2.01955740192118e-06\\
16385	7.91986700010258e-07\\
};\label{fig:gap_data2_rho01_DCA}
\addplot [color=mycolor3,  dashed ,line width=3.0pt, forget plot]
  table[row sep=crcr]{%
0.0211702938436786	0.00186095057779821\\
0.0423405876873571	0.00186082053216092\\
0.0635108815310357	0.0018606570523583\\
0.105851469218393	0.00186038154804063\\
0.190532644593107	0.00185843984802961\\
0.359894995342535	0.00185580155693094\\
0.698619696841392	0.00184784020943484\\
1.37606909983911	0.00183489987996713\\
2.73096790583453	0.00180303653931493\\
5.44076551782539	0.00174350511098549\\
10.8603607418071	0.00163088442014247\\
21.6995511897705	0.00141987617085248\\
43.3779320856974	0.00111008455824181\\
86.734693877551	0.000734751693013084\\
173.448217461258	0.000395370757258935\\
346.875264628673	0.000175338119542422\\
693.729358963502	6.78363663275143e-05\\
1387.43754763316	2.44539220708194e-05\\
2774.85392497248	8.90668678561313e-06\\
5549.68667965111	3.4032221469298e-06\\
};\label{fig:gap_data2_rho01_BDCA}
    \end{groupplot}

        \node[anchor=north east, draw = black, line width=0.5pt, fill=white, font=\footnotesize]  
        (legend) at ([shift={(-3.1cm,+3.1cm)}] figure1.north east) 
        {
            \begin{tabular}{l l}
                DCA  & \ref*{fig:gap_data2_rho01_DCA} \\
                \algo & \ref*{fig:gap_data2_rho01_BDCA} 
            \end{tabular}
        };
        
\end{tikzpicture}

\caption{Comparison of DC and \algo for logistic regression with nonconvex sparsity regularization in \eqref{eqn:classification-reg-lK}, in terms of the objective value and the gap (as defined in \Cref{def:gap}), on \textsc{Rcv1} and \textsc{Real-sim} datasets, with $\rho = 0.1$. Epoch represents iteration $\times$ block-size$/m$.}
 \label{fig:logistic_rho1}
\end{figure*} 

\begin{figure*}[!t]
    \centering
    \begin{tikzpicture}
    
    \begin{groupplot}[group style={group size=2 by 2 , horizontal sep = 2cm , vertical sep = 2.0cm},        
        width = 0.36\textwidth, 
        height = 3.5cm,            
        name = {figure_BDCA}
        grid = both, 
        grid style = {gray!30},        
        tick label style = {font=\scriptsize}, 
        name = figure1,
        ]

\nextgroupplot[
scale only axis,
xmode=log,
xmin=0.01,
xmax=10000,
xminorticks=true,
ymode=linear,
ymin=0.0,
ymax=0.8,
ytick = {0,0.1,0.2,0.3,0.4,0.5,0.6,0.7,0.8},
yminorticks=true,
ylabel style={font=\footnotesize\color{white!15!black}},
xlabel style={font=\footnotesize\color{white!15!black}},
ylabel={objective},
xlabel={epoch},
axis background/.style={fill=white},
title style={font=\bfseries},
title style={font=\footnotesize\bfseries},
title={(\textsc{Real-sim}, $\rho = 10$)},
title style={at={(axis description cs:1.1,1)},anchor=south},
xmajorgrids,
xminorgrids,
ymajorgrids,
yminorgrids]

\addplot [color=mycolor1, line width=3.0pt, forget plot]
  table[row sep=crcr]{%
1	0.693147180560083\\
2	0.690885418911668\\
3	0.688410455096051\\
5	0.683572604803572\\
9	0.674326025868702\\
17	0.657404317629549\\
33	0.628829965234622\\
65	0.586622584610179\\
129	0.534336113012632\\
257	0.479225393409811\\
513	0.426640822126174\\
1025	0.379654053026687\\
2049	0.341803729525188\\
4097	0.311910232962644\\
8193	0.290330475023049\\
16385	0.276022418612652\\
};\label{fig:obj_data1_rho10_DCA}
\addplot [color=mycolor3,  dashed ,line width=3.0pt, forget plot]
  table[row sep=crcr]{%
0.0477122000095424	0.693147180560083\\
0.0954244000190849	0.693087978771515\\
0.143136600028627	0.692850442750387\\
0.238561000047712	0.692543735699445\\
0.429409800085882	0.692213154224986\\
0.811107400162221	0.691250522848759\\
1.5745026003149	0.689527585634539\\
3.10129300062026	0.68611561654562\\
6.15487380123097	0.679067827158948\\
12.2620354024524	0.664667243452572\\
24.4763586048953	0.640772956048905\\
48.905005009781	0.60377766920912\\
97.7622978195525	0.554894021049831\\
195.476883439095	0.499927687900059\\
390.906054678181	0.445976032042616\\
781.764397156353	0.396284486502816\\
1563.4810821127	0.353166596036188\\
3126.91445202538	0.321568107961386\\
6253.78119185076	0.295504308832234\\
12507.5146715015	0.279288865773078\\
};\label{fig:obj_data1_rho10_BDCA}

\nextgroupplot[
scale only axis,
xmode=log,
xmin=0.01,
xmax=16385,
xminorticks=true,
ymode=log,
ymin=1e-07,
ymax=1e-02,
ytick = {1e-7,1e-6,1e-5,1e-4,1e-3,1e-2},
yminorticks=true,
ymajorticks=true,
ylabel style={font=\footnotesize\color{white!15!black}},
xlabel style={font=\footnotesize\color{white!15!black}},
ylabel={gap},
xlabel={epoch},
axis background/.style={fill=white},
title style={font=\bfseries},
xmajorgrids,
xminorgrids,
ymajorgrids,
]
\addplot [color=mycolor1, line width=3.0pt, forget plot]
  table[row sep=crcr]{%
1	0.00102880069230245\\
2	0.00124222307788568\\
3	0.00122334589173974\\
5	0.0011866063484041\\
9	0.00111722530616384\\
17	0.000993420800884483\\
33	0.000796414206511045\\
65	0.000543242034914753\\
129	0.000308101059575948\\
257	0.000153888683765561\\
513	7.03974345169543e-05\\
1025	3.04882456679763e-05\\
2049	1.12814412129251e-05\\
4097	4.76937759648952e-06\\
8193	1.49925153064658e-06\\
16385	4.40431103209302e-07\\
};\label{fig:gap_data1_rho10_DCA}
\addplot [color=mycolor3, dashed, line width=3.0pt, forget plot]
  table[row sep=crcr]{%
0.0477122000095424	0.000967668043984701\\
0.0954244000190849	0.00105379337420373\\
0.143136600028627	0.00110018950827344\\
0.238561000047712	0.00112824712656524\\
0.429409800085882	0.00115569749480833\\
0.811107400162221	0.00117416331685187\\
1.5745026003149	0.00117918688660809\\
3.10129300062026	0.00117823523729981\\
6.15487380123097	0.00113131480358932\\
12.2620354024524	0.00103993482017053\\
24.4763586048953	0.00087506854846666\\
48.905005009781	0.000640721441513901\\
97.7622978195525	0.000388487942673938\\
195.476883439095	0.000200787047977192\\
390.906054678181	9.70733800552321e-05\\
781.764397156353	4.387224378346e-05\\
1563.4810821127	1.73632421776208e-05\\
3126.91445202538	7.06293562717719e-06\\
6253.78119185076	2.1656732755887e-06\\
12507.5146715015	6.97194113224712e-07\\
};\label{fig:gap_data1_rho10_BDCA}

\nextgroupplot[scale only axis,
xmode=log,
xmin=0.01,
xmax=10000,
xminorticks=true,
ymode=linear,
ymin=0.0,
ymax=0.8,
ytick = {0,0.1,0.2,0.3,0.4,0.5,0.6,0.7,0.8},
yminorticks=true,
ylabel style={font=\footnotesize\color{white!15!black}},
xlabel style={font=\footnotesize\color{white!15!black}},
ylabel={objective},
xlabel={epoch},
axis background/.style={fill=white},
title style={font=\footnotesize\bfseries},
title={(\textsc{Rcv1}, $\rho = 10$)},
title style={at={(axis description cs:1.1,1)},anchor=south},
name = figure1,
xmajorgrids,
xminorgrids,
ymajorgrids,
yminorgrids,
]

\addplot [color=mycolor1, line width=3.0pt, forget plot]
  table[row sep=crcr]{%
1	0.693147180559993\\
2	0.690526513707113\\
3	0.687645110619211\\
5	0.681997586292014\\
9	0.671136809747999\\
17	0.6510168490754\\
33	0.616220233799142\\
65	0.562576740612639\\
129	0.492233095612722\\
257	0.415337261128724\\
513	0.343244852490016\\
1025	0.281551230671545\\
2049	0.231646970694403\\
4097	0.193604668354915\\
8193	0.167637658131592\\
16385	0.152258925168961\\
};\label{fig:obj_data2_rho10_DCA}
\addplot [color=mycolor3,  dashed ,line width=3.0pt, forget plot]
  table[row sep=crcr]{%
0.0211702938436786	0.693147180559993\\
0.0423405876873571	0.693092680188104\\
0.0635108815310357	0.693047720475252\\
0.105851469218393	0.692957095541142\\
0.190532644593107	0.692678347492574\\
0.359894995342535	0.692182875506091\\
0.698619696841392	0.6909682482011\\
1.37606909983911	0.689196021776917\\
2.73096790583453	0.685420721007045\\
5.44076551782539	0.677091251408485\\
10.8603607418071	0.663267084918574\\
21.6995511897705	0.638127099733751\\
43.3779320856974	0.593861997460091\\
86.734693877551	0.531827953601582\\
173.448217461258	0.457065527998028\\
346.875264628673	0.382035925430848\\
693.729358963502	0.314288469531204\\
1387.43754763316	0.258154091050346\\
2774.85392497248	0.214155290107412\\
5549.68667965111	0.182500869571243\\
};\label{fig:obj_data2_rho10_BDCA}

\nextgroupplot[scale only axis,
xmode=log,
xmin=0.01,
xmax=16385,
xminorticks=true,
ymode=log,
ymin=1e-07,
ymax=0.01,
ytick = {1e-7,1e-6,1e-5,1e-4,1e-3,1e-2},
yminorticks=true,
ylabel style={font=\footnotesize\color{white!15!black}},
xlabel style={font=\footnotesize\color{white!15!black}},
ylabel={gap},
xlabel={epoch},
axis background/.style={fill=white},
title style={font=\bfseries},
xmajorgrids,
xminorgrids,
ymajorgrids,
]
\addplot [color=mycolor1, line width=3.0pt, forget plot]
  table[row sep=crcr]{%
1	0.00118659210625419\\
2	0.00144590382005877\\
3	0.00142590267618056\\
5	0.00138879564396586\\
9	0.00131840404216084\\
17	0.00119164260951469\\
33	0.000986254543636575\\
65	0.00070970790980329\\
129	0.000425995586390902\\
257	0.000214263735478069\\
513	9.44617174321692e-05\\
1025	3.95463922137705e-05\\
2049	1.54079760313016e-05\\
4097	6.00141144974304e-06\\
8193	1.67137891343335e-06\\
16385	5.12982794225276e-07\\
};\label{fig:gap_data2_rho10_DCA}
\addplot [color=mycolor3, dashed,line width=3.0pt, forget plot]
  table[row sep=crcr]{%
0.0211702938436786	0.00118586968820007\\
0.0423405876873571	0.00119989066622999\\
0.0635108815310357	0.00121430907175362\\
0.105851469218393	0.00123557568065341\\
0.190532644593107	0.00127993275449892\\
0.359894995342535	0.0013249856401374\\
0.698619696841392	0.00137239248449816\\
1.37606909983911	0.00138741683133512\\
2.73096790583453	0.00138402548990467\\
5.44076551782539	0.0013411076597191\\
10.8603607418071	0.0012590003983805\\
21.6995511897705	0.00110732832939169\\
43.3779320856974	0.000856554300090873\\
86.734693877551	0.000573351940910601\\
173.448217461258	0.00031749458287695\\
346.875264628673	0.000150512360160917\\
693.729358963502	6.40728515036951e-05\\
1387.43754763316	2.63477960486817e-05\\
2774.85392497248	9.64424866825655e-06\\
5549.68667965111	3.44036777760957e-06\\
};\label{fig:gap_data2_rho10_BDCA}
    \end{groupplot}

        \node[anchor=north east, draw = black, line width=0.5pt, fill=white, font=\footnotesize]  
        (legend) at ([shift={(4.7cm,+3.1cm)}] figure1.north east) 
        {
            \begin{tabular}{l l}
                DCA  & \ref*{fig:gap_data2_rho10_DCA} \\
                \algo & \ref*{fig:gap_data2_rho10_BDCA} 
            \end{tabular}
        };
        
\end{tikzpicture}

\caption{Comparison of DC and \algo for logistic regression with nonconvex sparsity regularization in \eqref{eqn:classification-reg-lK}, in terms of the objective value and the gap (as defined in \Cref{def:gap}), on \textsc{Rcv1} and \textsc{Real-sim} datasets, with $\rho = 10$. Epoch represents iteration $\times$ block-size$/m$.}
\label{fig:logistic_rho10}
\end{figure*} 
 
\subsection{Nonconvex Quadratic Programming with Negative $\ell_1$ Regularization} 

The second experiment examines nonconvex quadratic programming with a negative $\ell_1$
norm regularization term to promote binary solutions, formulated as follows:
\begin{equation}\label{eqn:qubo}
    \min_{\|\bm x\|_{\infty}\leq 1} ~~ \bm x^T\bm Q \bm x -\lambda \|\bm x\|_1,
\end{equation}
where $\bm Q\in\mathbb{R}^{m\times m }$ is the symmetric (not necessarily positive semidefinite) cost matrix. 
We conduct experiments using the Gset benchmark datasets \cite{Gset}, setting the negative adjacency matrix as the cost matrix $\bm Q$. 
We set regularization parameter $\lambda = \|\bm Q\|_F/\sqrt{n}$ {, where $\|\cdot\|_F$ represents the Frobenius norm}. 

In this formulation, we first consider the following decomposition: $f(\bm x)= \bm x^T\bm Q \bm x,~g(\bm x)=0,~ h(\bm x)=\lambda \|\bm x\|_1,$ and $\mathcal{M}=[-1,1]^m$ is the separable constraint set. 
It is important to note that $f(\bm x)$ is nonconvex, but this does not affect the validity of our guarantees. 
It is easy to verify that our guarantees hold for any smooth $f(\bm x)$ as long as the subproblems can be solved globally. 
While nonconvex $f(\bm x)$ makes the subproblems nonconvex, solving them globally along a single coordinate is tractable, as demonstrated in this example.

For DCA, the subproblem becomes:
\begin{equation*}
    \min_{\|\bm x\|_{\infty}\leq 1} ~~ \bm x^T\bm Q \bm x -\lambda \langle \text{sign}(\bm x^k),\bm x - \bm x^k \rangle,
\end{equation*}
which is generally NP-hard. However, using \algo with block size 1, the subproblem simplifies to
\begin{equation*}
    \min_{x_i \in [-1,1]} ~~ (\bar{\bm x}_i^k + \bm x_i)^T\bm Q (\bar{\bm x}_i^k + \bm x_i) -\lambda \langle \text{sign}(\bm x^k),\bar{\bm x}_i^k + \bm x_i - \bm x^k \rangle.
\end{equation*}
Expanding this expression gives
\begin{equation*}
    x_i^{k+1} = \argmin_{x_i \in [-1,1]}~~ ax_i^2 + bx_i + c
\end{equation*}
for scalars $a = q_{ii}$, $b = 2\bm q_{i}^\top\bar{\bm x}_i^k - \lambda \text{sign}(x_i^k)$, and $c = (\bar{\bm x}_i^k)^T \bm Q \bar{\bm x}_i^k + \lambda \text{sign}(x_i^k)x_i^k$, where $q_{ii}$ denotes the $i$th diagonal entry of $\bm{Q}$ and $\bm{q}_i$ is the $i$th column.
This expression can be efficiently solved, highlighting the practical advantages of \algo. Even in scenarios where DCA involves NP-hard subproblems, \algo remains efficient. 

 {Because the DCA subproblem corresponding to the above decomposition is intractable, we instead apply DCA using a different splitting. Specifically, we decompose }$\bm Q = \bm Q_P + \bm Q_N$ where $\bm Q_P$ and $\bm Q_N$ represent positive and negative semidefinite components of $\bm Q$, respectively. 
Under this decomposition, one could redefine $f(\bm x) = \bm x^T\bm Q_P\bm x$ and $h(\bm x) = \lambda \|\bm x\|_1-\bm x^T\bm Q_N\bm x$. 
However, this approach requires eigendecomposition of $\bm Q$, which can be computationally expensive when $m$ is large. 
Moreover, $\bm Q_P$ and $\bm Q_N$ may lose useful structural properties. For example, when $\bm Q$ is sparse, $\bm Q_P$ and $\bm Q_N$ may be dense.

We also compare with the performance of RCSD from \cite{deng2020efficiency}. 
Notably, RCSD can be viewed as a special case of BDCA applied to the following reformulation of the problem:
\begin{equation*}
    \min_{\|\bm x\|_{\infty}\leq 1} ~~  \frac{L}{2}\|\bm x\|_2^2-\lambda \|\bm x\|_1 -(\frac{L}{2}\|\bm x\|_2^2 - \bm x^T\bm Q \bm x),
\end{equation*}
where $L = 2\|\bm Q\|$ is the smoothness constant. 
We compared DCA, \algo, and RCSD in terms of objective values at the local solutions found by the methods, and the computational time needed to reach such solution. 

Both algorithms are initialized from the same random starting point where entries are drawn from a Gaussian distribution, and run until the gap reaches $10^{-6}$. 
The results are summarized in \Cref{table:qubo}. 
Specifically, in 45 cases, \algo finds a better local optimum than DCA, while DCA performs better only in 5 datasets. 
Also, in 31 cases \algo outperforms RCSD and for 11 datasets RCSD finds a better solution. 
In most cases, \algo is faster than both DCA and RCSD. 

\begin{table}
    \centering
    \caption{Comparison of \algo, DCA, and RCSD performance in terms of objective value and computation time in nonconvex quadratic programming with negative $\ell_1$ regularization~\eqref{eqn:qubo}.}
    \begin{adjustbox}{height = 0.68\columnwidth,width=\columnwidth,center}
    \begin{tabular}{|c|c|c|c|c|c|c|}
    \hline
        Dataset & Obj(BDCA) & Obj(DCA) & Obj(RCSD) & Time(BDCA) & Time(DCA) & Time(RCSD) \\ \hline
        G1 & -43891.1 & -43891.1 & -43891.1 & 2.02 & 2.37 & 1.19 \\ \hline
         G2  & -43891.1 & -43891.1 & -43891.1 & 2.01 & 2.59 & 1.15 \\ \hline
         G3  & -43891.1 & -43891.1 & -43891.1 & 2.11 & 2.32 & 0.98 \\ \hline
         G4  & -43891.1 & -43891.1 & -43891.1 & 2.1 & 2.53 & 1.06 \\ \hline
         G5  & -43891.1 & -43891.1 & -43891.1 & 1.55 & 2.05 & 1.38 \\ \hline
         G6  & -11703.1 & -11243.1 & -11363.1 & 2.06 & 5.18 & 4.3 \\ \hline
         G7  & -11635.1 & -11787.1 & -11767.1 & 1.68 & 4.92 & 8.88 \\ \hline
         G8  & -11247.1 & -11691.1 & -11659.1 & 1.74 & 9.81 & 10.11 \\ \hline
         G9  & -11843.1 & -11299.1 & -11683.1 & 2.11 & 4.98 & 9.64 \\ \hline
         G10  & -12107.1 & -11291.1 & -11643.1 & 2.87 & 4.12 & 5.13 \\ \hline
         G11  & -3212 & -3204 & -3244 & 0.63 & 3.01 & 0.62 \\ \hline
         G12  & -3256 & -3184 & -3184 & 0.58 & 2.84 & 0.5 \\ \hline
         G13  & -3260 & -3132 & -3196 & 0.6 & 3.05 & 0.56 \\ \hline
         G14  & -12128.51 & -12128.51 & -12128.51 & 1.27 & 10.31 & 1.99 \\ \hline
         G15  & -7356.86 & -6604.86 & -12052.86 & 2.13 & 15.16 & 4.05 \\ \hline
         G16  & -6650.08 & -6438.08 & -6610.08 & 1.1 & 9.9 & 4.16 \\ \hline
         G17  & -12066.62 & -7486.62 & -7038.62 & 1.46 & 17.42 & 2.38 \\ \hline
         G18  & -5556.51 & -5680.51 & -5596.51 & 0.93 & 14.47 & 2 \\ \hline
         G19  & -5576.86 & -5564.86 & -5568.86 & 1.12 & 13.37 & 1.91 \\ \hline
         G20  & -5538.08 & -5346.08 & -5482.08 & 1.25 & 7.97 & 1.99 \\ \hline
         G21  & -5626.62 & -5438.62 & -5430.62 & 1.23 & 8.34 & 2.88 \\ \hline
         G22  & -48922.04 & -48922.04 & -48922.04 & 6.36 & 60.94 & 8.47 \\ \hline
         G23  & -48922.04 & -48922.04 & -48922.04 & 6.24 & 37.38 & 5.65 \\ \hline
         G24  & -48922.04 & -18238.04 & -48922.04 & 4.77 & 41.08 & 9.57 \\ \hline
         G25  & -48922.04 & -17910.04 & -48922.04 & 4.7 & 40.1 & 26.15 \\ \hline
         G26  & -48922.04 & -48922.04 & -48922.04 & 4.79 & 86.45 & 8.11 \\ \hline
         G27  & -18566.04 & -18054.04 & -18042.04 & 5 & 35.58 & 31.21 \\ \hline
         G28  & -18594.04 & -18154.04 & -18386.04 & 4.64 & 41.69 & 16.08 \\ \hline
         G29  & -18442.04 & -17842.04 & -18054.04 & 4.94 & 50.26 & 30.09 \\ \hline
         G30  & -18350.04 & -17958.04 & -18702.04 & 4.65 & 47.92 & 13.92 \\ \hline
         G31  & -18690.04 & -18074.04 & -18690.04 & 4.49 & 45.72 & 11.42 \\ \hline
         G32  & -8172 & -8076 & -8236 & 1.37 & 15.26 & 1.95 \\ \hline
         G33  & -8212 & -7996 & -8140 & 1.13 & 15.67 & 2.3 \\ \hline
         G34  & -8232 & -8024 & -8120 & 1.13 & 16.09 & 2.24 \\ \hline
         G35  & -16999.82 & -30419.82 & -30419.82 & 2.46 & 74.96 & 10.47 \\ \hline
         G36  & -30392.32 & -16928.32 & -30392.32 & 3.54 & 65.26 & 10.91 \\ \hline
         G37  & -30267.86 & -30435.86 & -30435.86 & 4.73 & 63.71 & 14.23 \\ \hline
         G38  & -30422.11 & -30422.11 & -30422.11 & 4.04 & 132.32 & 8.25 \\ \hline
         G39  & -13811.82 & -13551.82 & -13651.82 & 2.33 & 41.84 & 12.16 \\ \hline
         G40  & -13792.32 & -13584.32 & -13912.32 & 2.32 & 56.62 & 16.06 \\ \hline
         G41  & -13915.86 & -13815.86 & -13683.86 & 2.32 & 45.32 & 11.78 \\ \hline
         G42  & -14386.11 & -13806.11 & -13862.11 & 2.36 & 52.1 & 14.3 \\ \hline
         G43  & -24449.9 & -24449.9 & -24449.9 & 0.92 & 4.52 & 1.57 \\ \hline
         G44  & -24449.9 & -24449.9 & -24449.9 & 0.98 & 4.11 & 1.5 \\ \hline
         G45  & -24449.9 & -24449.9 & -24449.9 & 0.96 & 3.78 & 1.51 \\ \hline
         G46  & -24449.9 & -24449.9 & -24449.9 & 0.97 & 3.9 & 1.61 \\ \hline
         G47  & -24449.9 & -24449.9 & -24449.9 & 0.96 & 4.1 & 1.44 \\ \hline
         G48  & -13584 & -13112 & -13472 & 2.96 & 63.35 & 5.12 \\ \hline
         G49  & -13512 & -13184 & -13320 & 2.32 & 213.11 & 5 \\ \hline
    \end{tabular}
    \end{adjustbox}
    \label{table:qubo}
\end{table}

\begin{table}
    \centering
    \begin{adjustbox}{width=\columnwidth,center}
    \begin{tabular}{|c|c|c|c|c|c|c|}
    \hline
        Dataset & Obj(BDCA) & Obj(DCA) & Obj(RCSD) & Time(BDCA) & Time(DCA) & Time(RCSD) \\ \hline
         G50  & -13536 & -13168 & -13456 & 2.96 & 45.9 & 5.56 \\ \hline
         G51  & -15255.73 & -15255.73 & -15255.73 & 0.94 & 18.71 & 1.55 \\ \hline
         G52  & -15271.77 & -9083.77 & -15271.77 & 1.26 & 17.97 & 2.74 \\ \hline
         G53  & -15267.19 & -8131.19 & -8327.19 & 1.25 & 6.76 & 3.78 \\ \hline
         G54  & -15271.77 & -8239.77 & -15271.77 & 1.08 & 8.65 & 3.86 \\ \hline
         G55  & -21687.45 & -21039.45 & -21863.45 & 18.82 & 280.19 & 72.61 \\ \hline
         G56  & -21939.45 & -21055.45 & -21579.45 & 14.63 & 359.52 & 63.93 \\ \hline
         G57  & -20196 & -19796 & -20100 & 7.62 & 936.13 & 11.47 \\ \hline
         G58  & -76335.93 & -76335.93 & -76335.93 & 29.66 & 753.15 & 78.43 \\ \hline
         G59  & -35019.93 & -34219.93 & -34871.93 & 18.88 & 590.34 & 179.67 \\ \hline
         G60  & -30462.26 & -29770.26 & -30386.26 & 26.31 & 836.66 & 166.26 \\ \hline
         G61  & -30094.26 & -29406.26 & -29966.26 & 27.97 & 882.6 & 170.17 \\ \hline
         G62  & -28296 & -27984 & -28232 & 13.11 & 2398.02 & 31.71 \\ \hline
         G63  & -107010.03 & -61763 & -107010.03 & 117.79 & 4218.65 & 354.19 \\ \hline
         G64  & -49222.03 & -48250.03 & -48670.03 & 43.75 & 1535.71 & 365.12 \\ \hline
         G65  & -32372 & -32156 & -32532 & 17.69 & 3468.09 & 32.9 \\ \hline
         G66  & -36720 & -36360 & -36704 & 25.39 & 1011.99 & 55.51 \\ \hline
         G67  & -40428 & -40052 & -40402 & 33.96 & 6298.46 & 63.5 \\ \hline
    \end{tabular}
    \end{adjustbox}
\end{table}
\section{Conclusions}

We investigated a general class of DC problems involving smooth and separable nonsmooth terms, and proposed a randomized block-coordinate DC algorithm (\algo) specifically designed to solve such problems efficiently. 
We established convergence guarantees for \algo with a rate of $\mathcal{O}(n/K)$, where the algorithm performs $K$ iteration over $n$ coordinate blocks. 
Leveraging the connection between the DCA and the EM method, we proposed a  {randomized} block-coordinate EM algorithm (\algoem), where convergence guarantees are directly inherited from the DCA as a special case.
DCA and EM have numerous applications in data science and machine learning; hence, scalable alternatives like \algo and \algoem have the potential for significant impact. 

\subsection*{Acknowledgments}
Hoomaan Maskan and Alp Yurtsever were supported by the Wallenberg AI, Autonomous Systems and Software Program (WASP) funded by the Knut and Alice Wallenberg Foundation. Suvrit Sra acknowledges generous support from the Alexander von Humboldt Foundation.  {We thank the High Performance Computing Center North (HPC2N) at Umeå University for providing computational resources and valuable support during test and performance runs. The computations were enabled by resources provided by the National Academic Infrastructure for Supercomputing in Sweden (NAISS), partially funded by the Swedish Research Council through grant agreement no. 2022-06725.}

\begin{small}
\setlength{\bibsep}{4pt}
\bibliography{references}

@article{Gset,
    author = {Ye, Y.} ,
    title = {The {Gset} dataset},
    journal = {https://web.stanford.edu/ yyye/yyye/Gset/.},
    year = {2003}
}

@inproceedings{Karimi2016LinearCO,
  title={Linear Convergence of Gradient and Proximal-Gradient Methods Under the {P}olyak-{Ł}ojasiewicz Condition},
  author={Hamed Karimi and Julie Nutini and Mark W. Schmidt},
  booktitle={ECML/PKDD},
  year={2016},
  url={}
}

@inproceedings{faust2023bregman,
  title={A {B}regman divergence view on the difference-of-convex algorithm},
  author={Faust, Oisin and Fawzi, Hamza and Saunderson, James},
  booktitle={International Conference on Artificial Intelligence and Statistics},
  pages={3427--3439},
  year={2023},
  organization={PMLR}
}

@article{gao2020randomized,
  title={Randomized {B}regman coordinate descent methods for non-{L}ipschitz optimization},
  author={Gao, Tianxiang and Lu, Songtao and Liu, Jia and Chu, Chris},
  journal={arXiv preprint arXiv:2001.05202},
  year={2020}
}

@article{wright2015coordinate,
  title={Coordinate descent algorithms},
  author={Wright, Stephen J},
  journal={Mathematical programming},
  volume={151},
  number={1},
  pages={3--34},
  year={2015},
  publisher={Springer}
}

@article{dang2015stochastic,
  title={Stochastic block mirror descent methods for nonsmooth and stochastic optimization},
  author={Dang, Cong D and Lan, Guanghui},
  journal={SIAM Journal on Optimization},
  volume={25},
  number={2},
  pages={856--881},
  year={2015},
  publisher={SIAM}
}

@article{latafat2022block,
  title={Block-coordinate and incremental aggregated proximal gradient methods for nonsmooth nonconvex problems},
  author={Latafat, Puya and Themelis, Andreas and Patrinos, Panagiotis},
  journal={Mathematical Programming},
  volume={193},
  number={1},
  pages={195--224},
  year={2022},
  publisher={Springer}
}

@article{10.1145/1961189.1961199,
author = {Chang, Chih-Chung and Lin, Chih-Jen},
title = {{LIBSVM}: A library for support vector machines},
year = {2011},
issue_date = {April 2011},
publisher = {Association for Computing Machinery},
address = {New York, NY, USA},
volume = {2},
number = {3},
issn = {2157-6904},
url = {},
doi = {},
abstract = {LIBSVM is a library for Support Vector Machines (SVMs). We have been actively developing this package since the year 2000. The goal is to help users to easily apply SVM to their applications. LIBSVM has gained wide popularity in machine learning and many other areas. In this article, we present all implementation details of LIBSVM. Issues such as solving SVM optimization problems theoretical convergence multiclass classification probability estimates and parameter selection are discussed in detail.},
journal = {ACM Trans. Intell. Syst. Technol.},
month = may,
articleno = {27},
numpages = {27},
keywords = {Classification LIBSVM optimization regression support vector machines SVM}
}

@inproceedings{Kunstner2020HomeomorphicInvarianceOE,
  title={Homeomorphic-Invariance of {EM}: Non-Asymptotic Convergence in {KL} Divergence for Exponential Families via Mirror Descent},
  author={Frederik Kunstner and Raunak Kumar and Mark W. Schmidt},
  booktitle={International Conference on Artificial Intelligence and Statistics},
  year={2020},
  url={}
}

@inproceedings{kumar2017convergence,
  title={Convergence rate of {E}xpectation-{M}aximization},
  author={Kumar, Raunak and Schmidt, Mark},
  booktitle={10th NIPS Workshop on Optimization for Machine Learning},
  pages={98},
  year={2017}
}

@inproceedings{li2005regularized,
  title={The regularized {EM} algorithm},
  author={Li, Haifeng and Zhang, Keshu and Jiang, Tao},
  booktitle={AAAI},
  pages={807--812},
  year={2005}
}

@article{yi2015regularized,
  title={Regularized {EM} algorithms: {A} unified framework and statistical guarantees},
  author={Yi, Xinyang and Caramanis, Constantine},
  journal={Advances in Neural Information Processing Systems},
  volume={28},
  year={2015}
}

@inproceedings{8491219,
  title={Efficient non-linear equalization for 1-bit quantized cyclic prefix-free massive {MIMO} systems},
  author={Plabst, Daniel and Munir, Jawad and Mezghani, Amine and Nossek, Josef A},
  booktitle={2018 15th International Symposium on Wireless Communication Systems (ISWCS)},
  pages={1--6},
  year={2018},
  organization={IEEE}
}

@book{lange2016mm,
  title={MM optimization algorithms},
  author={Lange, Kenneth},
  year={2016},
  publisher={SIAM}
}

@article{aubry2018new,
  title={A new sequential optimization procedure and its applications to resource allocation for wireless systems},
  author={Aubry, Augusto and De Maio, Antonio and Zappone, Alessio and Razaviyayn, Meisam and Luo, Zhi-Quan},
  journal={IEEE Transactions on Signal Processing},
  volume={66},
  number={24},
  pages={6518--6533},
  year={2018},
  publisher={IEEE}
}

@article{xu2017globally,
  title={A globally convergent algorithm for nonconvex optimization based on block coordinate update},
  author={Xu, Yangyang and Yin, Wotao},
  journal={Journal of Scientific Computing},
  volume={72},
  number={2},
  pages={700--734},
  year={2017},
  publisher={Springer}
}

@article{razaviyayn2013unified,
  title={A unified convergence analysis of block successive minimization methods for nonsmooth optimization},
  author={Razaviyayn, Meisam and Hong, Mingyi and Luo, Zhi-Quan},
  journal={SIAM Journal on Optimization},
  volume={23},
  number={2},
  pages={1126--1153},
  year={2013},
  publisher={SIAM}
}

@article{yuille2003concave,
  title={The concave-convex procedure},
  author={Yuille, Alan L and Rangarajan, Anand},
  journal={Neural computation},
  volume={15},
  number={4},
  pages={915--936},
  year={2003},
  publisher={MIT Press}
}

@article{yurtsever2022cccp,
  title={{CCCP is Frank-Wolfe in disguise}},
  author={Yurtsever, Alp and Sra, Suvrit},
  journal={Advances in Neural Information Processing Systems},
  volume={35},
  pages={35352--35364},
  year={2022}
}

@inproceedings{cai2023cyclic,
  title={Cyclic block coordinate descent with variance reduction for composite nonconvex optimization},
  author={Cai, Xufeng and Song, Chaobing and Wright, Stephen and Diakonikolas, Jelena},
  booktitle={International Conference on Machine Learning},
  pages={3469--3494},
  year={2023},
  organization={PMLR}
}

@article{chorobura2023random,
  title={Random coordinate descent methods for nonseparable composite optimization},
  author={Chorobura, Flavia and Necoara, Ion},
  journal={SIAM Journal on Optimization},
  volume={33},
  number={3},
  pages={2160--2190},
  year={2023},
  publisher={SIAM}
}

@inproceedings{deng2020efficiency,
  title={Efficiency of coordinate descent methods for structured nonconvex optimization},
  author={Deng, Qi and Lan, Chenghao},
  booktitle={Joint European Conference on Machine Learning and Knowledge Discovery in Databases},
  pages={74--89},
  year={2020},
  organization={Springer}
}

@article{dempster1977maximum,
  title={Maximum likelihood from incomplete data via the {EM} algorithm},
  author={Dempster, Arthur P and Laird, Nan M and Rubin, Donald B},
  journal={Journal of the royal statistical society: series B (methodological)},
  volume={39},
  number={1},
  pages={1--22},
  year={1977},
  publisher={Wiley Online Library}
}

@inproceedings{yuan2023coordinate,
  title={Coordinate descent methods for dc minimization: Optimality conditions and global convergence},
  author={Yuan, Ganzhao},
  booktitle={Proceedings of the AAAI Conference on Artificial Intelligence},
  volume={37},
  number={9},
  pages={11034--11042},
  year={2023}
}

@article{pham2022alternating,
  title={{Alternating DC algorithm for partial DC programming problems}},
  author={Pham Dinh, Tao and Huynh, Van Ngai and Le Thi, Hoai An and Ho, Vinh Thanh},
  journal={Journal of Global Optimization},
  volume={82},
  number={4},
  pages={897--928},
  year={2022},
  publisher={Springer}
}

@article{lethi30years,
author = {Le Thi, Hoai An and Pham Dinh, Tao},
year = {2018},
month = {01},
pages = {},
title = {{DC} programming and {DCA}: thirty years of developments},
volume = {169},
journal = {Mathematical Programming},
doi = {}
}

@incollection{TAO1986249,
	abstract = {Publisher Summary
An algorithm does not make use of the analytical characterizations of optimal solutions that resort to the qualitative study of a class of problems. The first step consists of an effective realization of simple and finite algorithms for solving a problem. The chapter adapts the algorithms to solve the problems. The methods of subgradients are very easy to program and are actually the only ones which enables to treat practical problems of large size; their only drawback is that one cannot say in general, if the local maximums obtained are effectively global maximums, except if an initial vector x$\,^{\circ}$ is close enough to a global maximum. In the latter case, this is possible because of the duality in convex optimization used above for establishing the relations between the solutions of the primal and dual problems. Furthermore, this chapter also highlights duality schemes for problem (P1) and to methods of subgradients for solving this problem.},
	author = {Pham Dinh Tao and El Bernoussi Souad},
	booktitle = {Fermat Days 85: Mathematics for Optimization},
	doi = {},
	editor = {J.-B. Hiriart-Urruty},
	issn = {0304-0208},
	pages = {249-271},
	publisher = {North-Holland},
	series = {North-Holland Mathematics Studies},
	title = {Algorithms for Solving a Class of Nonconvex Optimization Problems. Methods of Subgradients},
	url = {},
	volume = {129},
	year = {1986},
	bdsk-url-1 = {},
	}

@inproceedings{argyriou2006dc,
  title={A {DC}-programming algorithm for kernel selection},
  author={Argyriou, Andreas and Hauser, Raphael and Micchelli, Charles A and Pontil, Massimiliano},
  booktitle={Proceedings of the 23rd international conference on Machine learning},
  pages={41--48},
  year={2006}
}

@article{awasthi2024dc,
  title={{DC}-programming for neural network optimizations},
  author={Awasthi, Pranjal and Mao, Anqi and Mohri, Mehryar and Zhong, Yutao},
  journal={Journal of Global Optimization},
  pages={1--17},
  year={2024},
  publisher={Springer}
}

@article{awasthi2024best,
  title={Best-effort adaptation},
  author={Awasthi, Pranjal and Cortes, Corinna and Mohri, Mehryar},
  journal={Annals of Mathematics and Artificial Intelligence},
  pages={1--46},
  year={2024},
  publisher={Springer}
}

@inproceedings{sriperumbudur2007sparse,
  title={Sparse eigen methods by {DC} programming},
  author={Sriperumbudur, Bharath K and Torres, David A and Lanckriet, Gert RG},
  booktitle={Proceedings of the 24th international conference on Machine learning},
  pages={831--838},
  year={2007}
}

@book{mclachlan2007algorithm,
  title={The {EM} algorithm and extensions},
  author={McLachlan, Geoffrey J and Krishnan, Thriyambakam},
  year={2007},
  publisher={John Wiley \& Sons}
}

@inproceedings{zhang2020complexity,
  title={Complexity of finding stationary points of nonconvex nonsmooth functions},
  author={Zhang, Jingzhao and Lin, Hongzhou and Jegelka, Stefanie and Sra, Suvrit and Jadbabaie, Ali},
  booktitle={International Conference on Machine Learning},
  pages={11173--11182},
  year={2020},
  organization={PMLR}
}

@article{kornowski2021oracle,
  title={Oracle complexity in nonsmooth nonconvex optimization},
  author={Kornowski, Guy and Shamir, Ohad},
  journal={Advances in Neural Information Processing Systems},
  volume={34},
  pages={324--334},
  year={2021}
}

@inproceedings{fatkhullin2024taming,
  title={Taming nonconvex stochastic mirror descent with general {B}regman divergence},
  author={Fatkhullin, Ilyas and He, Niao},
  booktitle={International Conference on Artificial Intelligence and Statistics},
  pages={3493--3501},
  year={2024},
  organization={PMLR}
}
\end{small}

\renewcommand{\theequation}{SM\arabic{equation}}
\renewcommand{\thetheorem}{SM\arabic{theorem}}
\renewcommand{\thefigure}{SM\arabic{figure}}
\renewcommand{\thedefinition}{SM\arabic{definition}}
\renewcommand{\thealgorithm}{SM\arabic{algorithm}}

\setcounter{equation}{0}
\setcounter{algorithm}{0}
\setcounter{theorem}{0}
\newpage
\appendix 

\section{Regularized EM}
In \Cref{sec:EM}, we discussed the relationship between the EM algorithm and DCA. 
To maintain simplicity and consistency with the standard EM algorithm, we focused on minimizing the negative log-likelihood function. 
However, it is common practice to solve regularized log-likelihood problems, where a regularization term is introduced to prevent overfitting, improve generalization, or promote specific structures in the solution \cite{li2005regularized,yi2015regularized}. 
In this context, the Regularized EM (REM) algorithm seeks to optimize the penalized negative log-likelihood problem:
\begin{equation} \label{eqn:RML}
    \min_{\bm{\theta} \in \Theta} ~ \hat{\mathcal{L}}(\bm{\theta}) :=  \sum_{\bm{x} \in \mathcal{X}}\sum_{\bm{y}}-\log P(\bm{x},\bm{y}|{\bm{\theta}}) + \lambda R(\bm{\theta}), 
\end{equation}
where $R$ is a convex regularization function with block-separable structure $R(\bm{\theta}) = \sum_{i=1}^n R_i({\theta_i})$, and $\lambda \geq 0$ is the regularization parameter. 
Similarly, we assume $\Theta$ can be decomposed as $\Theta_1 \times \cdots \times \Theta_n$, where the components, such that $\bm{\theta} \in \Theta$ if and only if $\theta_i \in \Theta_i$ for all $i=1,\ldots,n$.  

For exponential family of distributions, as defined in \Cref{def:expo-family}, we can express \eqref{eqn:RML} as a specific instance of the problem template
\begin{equation}\label{eqn:REM_DC}
    \min_{\bm{\theta} \in \Theta} ~ f(\bm{\theta}) + g(\bm{\theta}) - h(\bm{\theta}),
\end{equation}
where $f$ and $h$ are as defined in \eqref{eqn:functions}, and $g(\bm{\theta}) = \lambda R(\bm{\theta})$.

Employing \algo to solve \eqref{eqn:REM_DC} results in Block REM method shown in \Cref{alg:BCREM}. The following corollary, which is a direct consequence of Theorem 4, formulates the convergence behaviour of the Block REM method in terms of the introduced gap function (See \Cref{def:gap}).
\begin{corollary}
    Suppose there exists a sequence ${\bm{\theta}^k}, k =1\ldots,K$ generated by Block REM algorithm. 
    Then, 
        \begin{equation*}
        \min_{k\in\{1,\ldots,K\}} \expect{\gap^L_{\Theta}(\bm{\theta}^k)}
        \leq \frac{n}{K} \left(\hat{\mathcal{L}}(\bm{\theta}^1) - \hat{\mathcal{L}}^\star\right).
    \end{equation*}
\end{corollary}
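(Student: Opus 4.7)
The plan is to cast the penalized log-likelihood \eqref{eqn:RML} into the DC template \eqref{eqn:main} via the decomposition \eqref{eqn:REM_DC} and then invoke \Cref{thm:theorem-gap} verbatim. Concretely, I would first verify the three structural hypotheses required by \Cref{thm:theorem-gap}: the function $f$ from \eqref{eqn:functions} is convex and $L$-smooth as a scaled log-partition function of a natural exponential family (its Hessian equals $N$ times the covariance of the sufficient statistic, which is bounded under the standard assumptions on $\Theta$ and $T$); the function $h$ is convex as a sum of log-sum-exp functions; and $g = \lambda R$ is convex and block separable with blocks $g_i(\theta_i) = \lambda R_i(\theta_i)$, inheriting the separability assumed of $R$. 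The constraint set $\Theta = \Theta_1 \times \cdots \times \Theta_n$ is block separable by assumption.

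Next I would check that the Block REM method in \Cref{alg:BCREM} is exactly \algo applied to \eqref{eqn:REM_DC}. The derivation is parallel to the DCA-EM equivalence leading to \eqref{eqn:em_deriv}: linearizing the concave term $-h$ at $\bm{\theta}^k$ along the $i_k$-th block produces a subproblem whose M-step component collapses to the expected complete-data log-likelihood (via the posterior $\hat{P}^{k+1}$), and leaving $g_{i_k}$ intact adds back the regularizer $\lambda R_{i_k}(\theta_{i_k})$. This coincides with the Block REM update.

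Having made these identifications, \Cref{thm:theorem-gap} applies directly and gives
\begin{equation*}
\min_{k \in \{1,\ldots,K\}} \expect{\gap^L_{\Theta}(\bm{\theta}^k)} \leq \frac{n}{K}\bigl(\phi(\bm{\theta}^1) - \phi^\star\bigr),
\end{equation*}
where $\phi = f + g - h = \hat{\mathcal{L}}$ by construction, which is the claim. The only substantive technical point is ensuring a finite smoothness constant $L$ for $f$; this requires either compactness of $\Theta$, boundedness of the sufficient statistics $T(\bm{x},\bm{y})$, or an analogous moment condition so that the log-partition Hessian is uniformly bounded on $\Theta$. Once this is in place, the corollary is a pure specialization of \Cref{thm:theorem-gap} and no new argument is needed.
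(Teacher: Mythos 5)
Your proposal is correct and follows exactly the route the paper intends: the paper presents this corollary as a direct specialization of \Cref{thm:theorem-gap} to the DC decomposition \eqref{eqn:REM_DC}, with Block REM recognized as \algo applied to that problem, which is precisely your argument. Your added remark that a finite smoothness constant $L$ for the log-partition term $f$ requires compactness of $\Theta$ or bounded sufficient statistics is a reasonable point of care that the paper leaves implicit (it assumes smoothness of $f$ from the log-sum-exp structure without dwelling on uniformity), but it does not change the substance of the proof.
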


\begin{algorithm*}[tb]
   \caption{Block REM Algorithm}
   \label{alg:BCREM} 
\begin{algorithmic}
   \STATE {\bfseries Input:} regularization parameter $\lambda$, total iterations $K$, initialize the probability distribution $P$ using vector $\bm{\theta}^{(0)}$ ($P$ belongs to the exponential family), and number of blocks $n$
   \FOR{ $k=1$ {\bfseries to} $K$}
        \STATE Randomly choose $i_k$ in $[1,...,n]$ with uniform distribution
        \STATE Update $ \hat{P}^{k+1}(\bm{y}) =\frac{P(\bm{x},\bm{y}|\bm{\theta}^k)}{\sum_{\bm{y}} P(\bm{x},\bm{y}|\bm{\theta}^k)}$
        \STATE Update ${\theta}_{i_k}^{k+1} = \underset{\theta_{i_k} \in \Theta_{i_k}}{\mathrm{arg\,min}} \sum_{\bm{x}\in\mathcal{X},\bm{y}}-\hat{P}^{k+1}(\bm{y})\log P(\bm{x},\bm{y}|\theta^k_{1},\ldots,\theta_{i_k},\ldots,\theta^k_{n}) + \lambda R_{i_k}(\theta_{i_k})$
        \STATE Set $\bm{\theta}^{k+1} = \bar{\bm{\theta}}_{i_k}^{k}+ \bm{\theta}_{i_k}^{k+1}$
   \ENDFOR
   \STATE {\bfseries Output: $\bm{\theta}^{K+1}$}
\end{algorithmic}
\end{algorithm*}

\section{BDCA Recovers Coordinate (Sub)Gradient Descent}

Randomized coordinate subgradient descent method (RCSD) is a block coordinate proximal type subgradient algorithm for similar DC nonconvex problems as ours (see problem \eqref{eqn:main}).
RCSD assumes an additional assumption on function $g$ to have a simple structure \cite{deng2020efficiency}.
Here, we briefly present RCSD and its convergence guarantee. Later, we attempt to show how \algo recovers RCSD.

RCSD is a block coordinate subgradient method which iteratively updates some random coordinates while keeping the rest of the coordinates fixed (similar to \algo). 
To formally present the method, we need block proximal mapping given by
\begin{equation*}
    \mathcal{P}_i(\bar{x}_i,y_i,\gamma_i) = \argmin_{x\in\mathbb{R}^{m_i}}\left\{ \ip{y_i}{x} +  g_i(x) + \frac{\gamma_i}{2}\|\bar{x}_i-x\|_i^2\right\},\quad \forall y_i\in\mathbb{R}^{m_i}
\end{equation*}
where $\bm \gamma = [\gamma_1,\gamma_2,\ldots,\gamma_n]$, $\|.\|_i$ is the standard Euclidean distance on $\mathbb{R}^{m_i}$, and $\mathcal{P}(\bar{\bm{x}},\bm{y},\bm \gamma) = \sum_{i=1}^n \bm D_i^T \bm D_i\mathcal{P}_i(\bar{x}_i,y_i,\gamma_i)$. The RCSD method is shown in \Cref{alg:RCSD}.

Next, we will present the convergence result of RCSD for uniform sampling strategy. For more detailed description of the results, see \cite{deng2020efficiency}.
\begin{theorem}[Theorem 3, \cite{deng2020efficiency}]\label{theorem:RCSD}
    Suppose the sequence $\bm{x}^k, k=1,\ldots, K$, is generated by \Cref{alg:RCSD}. Assume $\gamma_i = L_i$ for $L_i$ being block wise Lipschitz constants for function $f$. Then,
    \begin{equation*}
        \min_{k\in\{0,\ldots,K\}} \mathbb{E}\Big\{ \|G({\bm{x}}^{k},\nabla f(\bm{x}^k) - \bm v^k,\bm \gamma)\|^2\Big\}\leq \frac{2 L_{max} n (\phi(\bm{x}^1)-\phi^*)}{K},
    \end{equation*}
    where $G({\bm{x}}^{k},\nabla f(\bm{x}^k) - \bm v^k,\bm\gamma) = \sum_{i=1}^n \bm D_i^T \bm D_i G_i({x}_{i}^{k},\nabla_i f(\bm{x}^k) - v_i^k,\gamma_i)$, $L_{max} = \max_{i\in\{1,\ldots,n\}} L_i$, and
    \[G_i({x}_{i},\nabla_i f(\bm{x}) - v_i,\gamma_i) = \gamma_i(x_i- \mathcal{P}_{i}({x}_{i},\nabla_{i}f(\bm{x}) - v_{i},\gamma_{i})).\]
\end{theorem}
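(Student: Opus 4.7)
The plan is to mirror the proof of \Cref{thm:theorem-gap} as closely as possible, since RCSD is essentially a block prox-linear variant of \algo where the subproblem replaces the surrogate $\hat\phi_{i_k}$ by a quadratic majorant along the $i_k$-th coordinate block. The argument decomposes into three pieces: a per-iteration \emph{sufficient decrease} bound that controls the prox residual $G_{i_k}$ in terms of the drop in $\phi$; a conditional-expectation step converting the block-wise residual into the full residual $G$; and a telescoping average together with the minimum-over-iterates trick.

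For the descent, I would start from first-order optimality of the prox. Writing the defining subproblem of $\mathcal P_{i_k}(x_{i_k}^k,\nabla_{i_k} f(\bm x^k) - v_{i_k}^k,L_{i_k})$ as a strongly convex minimization and applying Fermat's rule, there is an explicit subgradient $s\in\partial g_{i_k}(x_{i_k}^{k+1})$ of the form $s = L_{i_k}(x_{i_k}^k - x_{i_k}^{k+1}) - \nabla_{i_k} f(\bm x^k) + v_{i_k}^k$. Substituting this into the subgradient inequality for $g_{i_k}$ and combining with the block-Lipschitz descent lemma $f(\bm x^{k+1})-f(\bm x^k)\le \langle \nabla_{i_k} f(\bm x^k), x_{i_k}^{k+1}-x_{i_k}^k\rangle + \tfrac{L_{i_k}}{2}\|x_{i_k}^{k+1}-x_{i_k}^k\|^2$ produces cancellations akin to those in \eqref{eqn:proof-main-step} and yields
\[
f(\bm x^{k+1}) + g(\bm x^{k+1}) - f(\bm x^k) - g(\bm x^k) \le -\tfrac{L_{i_k}}{2}\|x_{i_k}^{k+1}-x_{i_k}^k\|^2 + \langle v_{i_k}^k,\, x_{i_k}^{k+1}-x_{i_k}^k\rangle.
\]
Convexity of $h$ at $\bm x^k$ with subgradient $\bm v^k$ absorbs the inner-product term, and the identity $L_{i_k}^2\|x_{i_k}^{k+1}-x_{i_k}^k\|^2 = \|G_{i_k}(x_{i_k}^k,\nabla_{i_k} f(\bm x^k)-v_{i_k}^k, L_{i_k})\|^2$ then gives the per-iteration bound $\phi(\bm x^k)-\phi(\bm x^{k+1})\ge \tfrac{1}{2L_{i_k}}\|G_{i_k}\|^2 \ge \tfrac{1}{2L_{\max}}\|G_{i_k}\|^2$.

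Taking the conditional expectation over the uniform draw of $i_k$ converts the single-block residual into $\tfrac{1}{n}\|G(\bm x^k,\nabla f(\bm x^k)-\bm v^k,\bm\gamma)\|^2$, yielding $\mathbb{E}[\|G\|^2]\le 2nL_{\max}\,\mathbb{E}[\phi(\bm x^k)-\phi(\bm x^{k+1})]$. Averaging over $k=1,\ldots,K$ telescopes the right-hand side to $\phi(\bm x^1)-\phi^\star$, and taking the minimum across iterates delivers the claimed bound.

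The main obstacle is the bookkeeping inside the descent step: the quadratic penalty released by the subgradient inequality ($-L_{i_k}\|\cdot\|^2$, after expanding $\langle s, x_{i_k}^k - x_{i_k}^{k+1}\rangle$) must strictly dominate the $+\tfrac{L_{i_k}}{2}\|\cdot\|^2$ picked up from smoothness, leaving a residual $-\tfrac{L_{i_k}}{2}\|\cdot\|^2$ with exactly the right coefficient to match $\|G_{i_k}\|^2/(2L_{i_k})$. This precise alignment is what makes the choice $\gamma_i = L_i$ natural; any $\gamma_i > L_i$ would still yield a valid but slightly looser rate.
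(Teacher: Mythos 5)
Your proposal is correct, but there is nothing in the paper to measure it against: the statement is imported verbatim as Theorem~3 of \citet{deng2020efficiency} and is never proved here, serving only as the benchmark for the subsequent proposition that \algo recovers RCSD. The closest internal relative is the proof of \Cref{thm:theorem-gap}, and your argument is essentially that proof transplanted to the prox-linear subproblem: prox optimality (your Fermat's-rule subgradient $s=L_{i_k}(x^k_{i_k}-x^{k+1}_{i_k})-\nabla_{i_k}f(\bm{x}^k)+v^k_{i_k}$, or equivalently the $L_{i_k}$-strong convexity of the subproblem objective) combined with the blockwise descent lemma and convexity of $h$ yields the sufficient decrease $\phi(\bm{x}^k)-\phi(\bm{x}^{k+1})\ge\tfrac{1}{2L_{i_k}}\|G_{i_k}\|^2\ge\tfrac{1}{2L_{\max}}\|G_{i_k}\|^2$; orthogonality of the blocks turns the conditional expectation of $\|G_{i_k}\|^2$ into $\tfrac1n\|G(\bm{x}^k,\nabla f(\bm{x}^k)-\bm{v}^k,\bm{\gamma})\|^2$; telescoping plus the min-versus-average step gives exactly the stated $2nL_{\max}/K$ constant. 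One point worth appreciating is that a direct proof is genuinely needed here: the seemingly cheaper route of combining the paper's proposition (RCSD as a special case of \algo) with \Cref{thm:theorem-gap} and the inequality $\gap^{L}_{\mathcal{M}}(\bm{y})\ge\tfrac{1}{2L}\norm{G(\bm{y},\cdot)}^2$ only works if the reformulated concave part $h(\bm{x})+\tfrac{L_{\max}}{2}\norm{\bm{x}}^2-f(\bm{x})$ is convex, i.e.\ if $L_{\max}$ dominates the \emph{global} smoothness constant of $f$, which blockwise Lipschitz constants alone do not guarantee; your argument uses smoothness only along the updated block and therefore obtains the theorem with the blockwise constant. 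The only bookkeeping caveat is that when $\mathcal{M}_i\neq\R^{m_i}$ the indicator of $\mathcal{M}_{i_k}$ must be folded into $g_{i_k}$, since the paper's $\mathcal{P}_i$ is defined over all of $\R^{m_i}$; this changes nothing in your derivation.
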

Now, we will show how \algo relates to RCSD. This result is based on the smoothness of $f$.
\begin{proposition}
    \algo covers RCSD with uniform sampling strategy as a special instance. 
\end{proposition}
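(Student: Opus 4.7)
The plan is to exhibit a specific DC decomposition of the objective \eqref{eqn:main} such that running \algo on this decomposition reproduces the RCSD update and random block selection exactly. Given the original splitting $\phi = f + g - h$ and the block-wise smoothness constants $L_1,\ldots,L_n$ of $f$, I would introduce the alternative splitting
\begin{equation*}
    \tilde f(\bm x) := \nlsum_{i=1}^n \tfrac{L_i}{2}\|x_i\|^2, \qquad \tilde g(\bm x) := g(\bm x), \qquad \tilde h(\bm x) := h(\bm x) + \nlsum_{i=1}^n \tfrac{L_i}{2}\|x_i\|^2 - f(\bm x),
\end{equation*}
so that $\phi = \tilde f + \tilde g - \tilde h$. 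The term $\tilde f$ is convex, smooth, and block-separable, while $\tilde h$ is convex provided $\operatorname{diag}(L_1 I_{m_1},\ldots,L_n I_{m_n}) - \nabla^2 f \succeq 0$; this holds in particular when each $L_i$ dominates the block Lipschitz constant of $\nabla_i f$, which is precisely the regime assumed by \Cref{theorem:RCSD}.

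Next, I would compute the subgradient used by \algo under this new decomposition. Since only $h$ is potentially nonsmooth in $\tilde h$, for any $\bm v^k \in \partial h(\bm x^k)$ the vector
\begin{equation*}
    \tilde{\bm v}^k := \bm v^k + \nlsum_{i=1}^n L_i \bm x_i^k - \nabla f(\bm x^k)
\end{equation*}
belongs to $\partial \tilde h(\bm x^k)$. Writing the \algo subproblem from \Cref{alg:BCDC} with $(\tilde f,\tilde g,\tilde h)$ and the subgradient $\tilde{\bm v}^k$, and restricting to block $i_k$, the objective becomes
\begin{equation*}
    \tfrac{L_{i_k}}{2}\|x_{i_k}\|^2 + g_{i_k}(x_{i_k}) - \langle v_{i_k}^k + L_{i_k} x_{i_k}^k - \nabla_{i_k} f(\bm x^k),\, x_{i_k}\rangle,
\end{equation*}
since the other blocks of $\tilde f$ and the terms of $\tilde{\bm v}^k$ that act on $\bar{\bm x}_{i_k}^k$ are constant in $x_{i_k}$.

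Completing the square in $x_{i_k}$ turns this into
\begin{equation*}
    \argmin_{x_{i_k} \in \mathcal{M}_{i_k}} \ \langle \nabla_{i_k} f(\bm x^k) - v_{i_k}^k,\, x_{i_k}\rangle + g_{i_k}(x_{i_k}) + \tfrac{L_{i_k}}{2}\|x_{i_k} - x_{i_k}^k\|^2,
\end{equation*}
which is identical to $\mathcal{P}_{i_k}(x_{i_k}^k,\nabla_{i_k}f(\bm x^k) - v_{i_k}^k,L_{i_k})$. Together with the fact that both methods draw $i_k$ uniformly from $\{1,\ldots,n\}$ and update only the $i_k$-th block, this identifies the RCSD iterate with the \algo iterate under the chosen DC splitting, proving the proposition.

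The main obstacle is justifying that $\tilde h$ is convex, i.e., that $\sum_i \tfrac{L_i}{2}\|x_i\|^2 - f(\bm x)$ is convex. Ordinary block-wise Lipschitzness of $\nabla_i f$ does not immediately imply the global PSD inequality on the Hessian; one must either invoke the stronger condition used by \citet{deng2020efficiency} for their analysis, or fall back to $L_i = L$ for the global Lipschitz constant, which always works. Once this point is handled, the remainder is a direct algebraic identification of the subproblems, so no further nontrivial step is required.
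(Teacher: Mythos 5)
Your proof is correct and follows essentially the same route as the paper's: both re-split $\phi$ by adding and subtracting a quadratic so that the new smooth convex term is (block-)quadratic, take the shifted subgradient of the new concave term, and identify the \algo block subproblem, after completing the square, with the block proximal map $\mathcal{P}_{i_k}(x_{i_k}^k,\nabla_{i_k} f(\bm{x}^k)-v_{i_k}^k,\cdot)$ of RCSD. The only difference is minor: the paper uses the single constant $L_{max}$ in the quadratic (so the recovered step sizes are $\gamma_i = L_{max}$), whereas you use block-wise constants $L_i$ and correctly flag that convexity of the modified concave part then requires $\operatorname{diag}(L_1 I_{m_1},\ldots,L_n I_{m_n}) \succeq \nabla^2 f$ (with the global-$L$ fallback), which is a careful refinement of, not a departure from, the paper's argument.
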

\begin{proof}
    Due to smoothness of $f$ we can write the objective function as 
    \begin{equation*}
        \phi(\bm{x}) = \frac{L_{max}}{2}\|\bm{x}\|^2 + g(\bm{x})  - (h(\bm{x}) + \frac{L_{max}}{2}\|\bm{x}\|^2 - f(\bm{x})). 
    \end{equation*}
    Take $\bm v_{i_k}^k \in \partial_{i_k} h(\bm{x}^k)$. 
    This gives the \algo subproblem as
    \begin{equation}\label{RCSD}
        \begin{aligned}
            x_{i_k}^{k+1} &\in \argmin_{x_{i_k} \in \mathcal{M}_{i_k}} \frac{L_{max}}{2}\| x_{i_k}\|^2 + g_{i_k}(x_{i_k})
            -\langle \bm v_{i_k}^k - \nabla_{i_k} f(\bm{x}^k) + L_{max} x^k_{i_k},x_{i_k} \rangle  \\
             &= \argmin_{x_{i_k} \in \mathcal{M}_{i_k}} \langle  \nabla_{i_k} f(\bm{x}^k) - \bm v_{i_k}^k , x_{i_k}\rangle
             + g_{i_k}(x_{i_k}) + \frac{L_{max}}{2}\| x_{i_k} - x_{i_k}^k\|^2\\
             &= \argmin_{x_{i_k} \in \mathcal{M}_{i_k}} \langle  \nabla_{i_k} f(\bm{x}^k)  {-} \bm v_{i_k}^k , x_{i_k} - x_{i_k}^k \rangle 
             + g_{i_k}(x_{i_k}) + \frac{L_{max}}{2}\| x_{i_k} - x_{i_k}^k\|^2 \\
             &= \argmin_{x_{i_k} \in \mathcal{M}_{i_k}} \frac{L_{max}}{2}\left[ \frac{2}{L_{max}}\langle \nabla_{i_k} f(\bm{x}^k) - \bm v_{i_k}^k, x_{i_k} - x_{i_k}^k \rangle \right]
             + g_{i_k}(x_{i_k}) + \frac{L_{max}}{2}\| x_{i_k} - x_{i_k}^k\|^2 \\
             &= \argmin_{x_{i_k} \in \mathcal{M}_{i_k}}  g_{i_k}(x_{i_k}) 
             + \frac{L_{max}}{2}\| x_{i_k} - (x_{i_k}^k-\frac{1}{L_{max}}(\nabla_{i_k} f(\bm{x}^k) - \bm v_{i_k}^k))\|^2 \\
             &= \prox_{\frac{1}{L_{max}}g_{i_k}+I_{\mathcal{M}_{i_k}}}(x_{i_k}^k-\frac{1}{L_{max}}(\nabla_{i_k} f(\bm{x}^k) - \bm v_{i_k}^k))\nonumber
        \end{aligned}
    \end{equation}
    which is the block proximal operator with constant $1/L_{max}$ with $I_{\mathcal{C}}$ denoting the indicator function on the set $\mathcal{C}$. Application of this update for each randomly selected block $i_k$ in \algo gives RCSD. 
\end{proof} 

\begin{algorithm*}[tb]
   \caption{RCSD Algorithm}
   \label{alg:RCSD} 
    \begin{algorithmic}
       \STATE {\bfseries Input:} $\bm{x}^{(0)}$, total iterations $K$, coefficients $\bm \gamma$, and number of block $n$
       \FOR{ $k=1$ {\bfseries to} $K$}
            \STATE Randomly choose $i_k$ in $[1,...,n]$ with uniform distribution
            \STATE Compute $\nabla_{i_k}f(\bm{x}^k)$ and $\bm v^k\in\partial h(\bm{x}^k)$
            \STATE Update ${x}_{i_k}^{k+1} = \mathcal{P}_{i_k}({x}_{i_k}^{k},\nabla_{i_k}f(\bm{x}^k) - v_{i_k}^k,\gamma_{i_k}) $
            \STATE Set $\bm{x}^{k+1} = \bar{\bm{x}}_{i_k}^{k}+ \bm{x}_{i_k}^{k+1}$
       \ENDFOR
        \STATE {\bfseries Output: $\bm{x}^{K+1}$}
    \end{algorithmic}
\end{algorithm*}

\section{Stationarity in Nonsmooth Optimization}

In this section, we illustrate through simple examples that two common measures for first-order stationarity in smooth optimization, namely the distance measure
\begin{equation*}
    \mathrm{dist}(0,\nabla f(\bm{y}) + \partial g(\bm{y}) - \nabla h(\bm{y}) + \mathcal{N}_{\mathcal{M}}(\bm{y})),
\end{equation*}
and the Frank-Wolfe gap (but with a subgradient $\bm{u} \in \partial g(\bm{y})$) 
\begin{equation*}
    \max_{\bm{x} \in \mathcal{M}} ~  \ip{\nabla f(\bm{y}) + \bm{u} - \nabla h(\bm{y})}{ \bm{y} - \bm{x}}   
\end{equation*}
are intractable in nonsmooth optimization. 
Specifically, we show that these measures can be discontinuous at the solution and remain lower bounded even when approaching the solution arbitrarily closely. 

Consider the problem where $\mathcal{M}=[-1,1]$, $f(x)=h(x)=0$, and $g = |x|$. 
The problem is convex, and the only first-order stationary point is at $x=0$, which is also the global minimum. 
The subdifferential of $g$ is and the normal cone of $\mathcal{M}$ are given by
\begin{equation*}
    \partial g(x) = 
    \begin{cases}
        \{1\} & \text{if $x > 0$}, \\
        \{-1\} & \text{if $x < 0$}, \\
        [-1,1] & \text{if $x = 0$},
    \end{cases}
    \qquad
    \mathcal{N}_{\mathcal{M}}(x) = 
    \begin{cases}
        \{0\} & \text{if $x \in (-1,1)$}, \\
        (-\infty, 0] & \text{if $x = -1$}, \\
        [0,+\infty) & \text{if $x = 1$}.
    \end{cases}
    \qquad\qquad
\end{equation*}
Then, the distance measure becomes
\begin{equation*}
    \mathrm{dist}(0,\partial g(y) + \mathcal{N}_{\mathcal{M}}(y)) 
    = \min_{\substack{u \in \partial g(y)\\z\in\mathcal{N}_{\mathcal{M}}(y)}} |u+z-0| = 
        \begin{cases}
            0 & \text{if $y=0$}, \\
            1 & \text{otherwise},
        \end{cases}
\end{equation*}
indicating that the distance is $0$ only at the global optimum and remains $1$ at all other points, even when arbitrarily close to the global optimum. 

Similarly, the Frank-Wolfe gap, even minimized over the subdifferential set, becomes
\begin{equation}
    \min_{u \in \partial g(y)} \, \max_{x \in [-1,1]} ~  \ip{u}{y - x} = 
    \begin{cases}
        0 & \text{if $y = 0$}, \\ 
        y+1 & \text{if $y > 0$}, \\
        1-y & \text{if $y < 0$},
    \end{cases}
\end{equation}
which shows that, except at $y=0$, the Frank-Wolfe gap is larger than $1$ at all other points, even when arbitrarily close to the global optimum. 

 {Now, if we use the gap function as in \Cref{def:gap}, we have:
\begin{equation*}
    \gap_{\mathcal{M}}^0 (y)= \max_{x\in[-1,1]} \left\{ |y| - |x| \right\} = |y|,
\end{equation*}
which is always non-negative and gets arbitrarily small as we approach the solution at $y=0$.}

Notably, since the examples are convex, these findings also extend to convex nonsmooth optimization problems.

\section{Proof of \Cref*{rem:gap_prox}}

We start by showing that $\gap_{\mathcal{M}}^L(\bm{y})=0$ if 
\begin{equation*}
    \bm{y} = \prox_{\frac{1}{L} g+I_{\mathcal{M}}} \Big((\bm{y}-\tfrac{1}{L}\big(\nabla f(\bm{y}) - \nabla h(\bm{y}) \big) \Big).
\end{equation*}
Using the definition of the proximal map we have
\begin{align*}
   \bm{y} &= \argmin_{\bm{x}\in\mathcal{M}}~~ g(\bm{x}) + \frac{L}{2}\|\bm{x}-(\bm{y}-\tfrac{1}{L}\big(\nabla f(\bm{y}) - \nabla h(\bm{y}) \big) )\|^2\nonumber\\
   &= \argmin_{\bm{x}\in\mathcal{M}}~~ g(\bm{x}) + \frac{L}{2}\|\bm{x} - \bm{y}\|^2 + \langle \bm{x} - \bm{y} , \nabla f(\bm{y}) - \nabla h(\bm{y})\rangle \nonumber\\
   &= \argmax_{\bm{x}\in\mathcal{M}}~~ -g(\bm{x}) - \frac{L}{2}\|\bm{x} - \bm{y}\|^2 + \langle \bm{y} - \bm{x} , \nabla f(\bm{y}) - \nabla h(\bm{y})\rangle .
\end{align*}
From this formulation, it follows that $\gap_{\mathcal{M}}^L(\bm{y})=0$. 

Next, we show that $\bm{y}$ is the fixed point of the proximal mapping  {when} $\gap_{\mathcal{M}}^L=0$. We start by the reformulating the definition of the gap function:
\begin{equation*}
    \gap^L_{\mathcal{M}}(\bm{y}) = L \max_{\bm{x} \in \mathcal{M}}  \bigg\{ \ip{\bm{x} + \frac{1}{L}\big(\nabla f(\bm{y})  - \nabla h(\bm{y})\big) - \bm{y}}{ \bm{y} - \bm{x}} + \frac{1}{L}(g(\bm{y}) - g(\bm{x})) + \frac{1}{2} \norm{\bm{x} - \bm{y}}^2 \bigg\}.
\end{equation*}
 {Suppose $\gap_{\mathcal{M}}^L =  {\epsilon}$. Then, by definition:}
\begin{equation} \label{eqn:prf_rem2_3}
    \ip{\bm{x} + \frac{1}{L}\big(\nabla f(\bm{y})  - \nabla h(\bm{y})\big) - \bm{y}}{ \bm{y} - \bm{x}} + \frac{1}{L}g(\bm{y}) - \frac{1}{L}g(\bm{x}) + \frac{1}{2} \norm{\bm{x} - \bm{y}}^2 \leq  {\frac{\epsilon}{L}}, 
\end{equation}
 {for all $\bm{x} \in \mathcal{M}$.} 
Consider this with $\bm{x} = \prox_{\frac{1}{L}g+I_{\mathbf{M}}}(\bm{u})$ and $\bm{u} = \bm{y}-\tfrac{1}{L}\big(\nabla f(\bm{y}) - \nabla h(\bm{y}) \big)$. 
Using non-expansiveness of the proximal operator we have
\begin{align}\label{eqn:prf_rem2_4}
      \ip{\bm{x} - \bm{u}}{\bm{y} - \bm{x}} + \frac{1}{L}g(\bm{y}) - \frac{1}{L} g(\bm{x}) &\geq 0.
\end{align}
Utilizing \eqref{eqn:prf_rem2_4} in \eqref{eqn:prf_rem2_3},  {we obtain the desired bound:
\begin{equation*}
    \frac{1}{2} \norm{ \bm{y} - \prox_{\frac{1}{L}g+I_{\mathbf{M}}}(\bm{u}) }^2 \leq \frac{\epsilon}{L}.
\end{equation*}
In particular, if $\epsilon = 0$, this implies $\bm{y} = \prox_{\frac{1}{L}g+I_{\mathcal{M}}}(\bm{u})$.}

 {
\section{Proof of \Cref*{lem:gap-nonsmooth}}
\label{app:gap_nonsmooth_proof}

    The proof follows the same reasoning as in \Cref{lem:gap}, with the extension that $h$ is allowed to be non-differentiable.

    The first statement is immediate, since the maximized expression vanishes when $\bm{x} = \bm{y}$, and $\bm{y} \in \mathcal{M}$. 

    For the second statement, let us first assume that condition \eqref{eqn:criticality} holds.
    Then, there exist $\bm{v}^\star \in \partial h(\bm y)$ and $\bm u^\star \in \partial g(\bm y)$ such that $ \bm{v}^* -\bm u^* - \nabla f(\bm y) \in \mathcal{N}_{\mathcal{M}}(\bm y)$, or equivalently
    \begin{equation*}
        \langle \nabla f(\bm y) +  \bm u^* -\bm{v}^*  , \bm y - \bm x\rangle \leq 0, \quad \forall \bm x \in \mathcal{M}. 
    \end{equation*}
    Since $g$ is convex, this implies
    \begin{align*}
      & \langle \nabla f(\bm y) -\bm{v}^*  , \bm y - \bm x\rangle +  g(\bm y) - g(\bm x)\leq 0\quad \forall \bm x \in \mathcal{M},\\
       \implies& \langle \nabla f(\bm y) -\bm{v}^*  , \bm y - \bm x\rangle +  g(\bm y) - g(\bm x) - \frac{L}{2}\|\bm x - \bm y\|^2 \leq 0\quad \forall \bm x \in \mathcal{M}.
    \end{align*}
    Since this inequality holds for all $\bm x \in \mathcal{M}$, we can maximize the left-hand side over $\bm{x}$. 
    Moreover, since it holds for at least one subgradient $\bm{v}^* \in \partial h(\bm y)$, we can minimize over the subdifferential set:
    \begin{equation*}
        \min_{\bm u \in \partial g(\bm y)} \max_{\bm x \in \mathcal{M}} ~ \Big\{  \langle \nabla f(\bm y) -\bm{v}  , \bm y - \bm x\rangle +  g(\bm y) - g(\bm x) - \frac{L}{2}\|\bm x - \bm y\|^2 \Big\} \leq 0.
    \end{equation*}
    Finally, by Von Neumann’s minimax theorem, we can interchange the order of $\min$ and $\max$ since both $\partial g(\bm y)$ and $\mathcal{M}$ are convex and compact sets; and the expression inside the braces is concave in $\bm x$ for any fixed $\bm u \in \partial g(\bm y)$, and convex in $\bm{v}$ for any fixed $\bm x \in \mathcal{M}$. 
    After this change, we obtain $\gap^L_{\mathcal{M}}(\bm{y}) \leq 0$. 
    Since the gap function is nonnegative by definition, we conclude that $\gap^L_{\mathcal{M}}(\bm{y}) = 0$. 

    We now prove the reverse direction. Suppose that $\gap^L_{\mathcal{M}}(\bm{y}) = 0$. 
    Then, there exists a subgradient $\bm{v}^\star \in \partial h(\bm y)$ such that
    \begin{equation*}
        \ip{\nabla f(\bm{y})  - \bm{v}^\star}{ \bm{x} - \bm{y}} + g(\bm{x}) - g(\bm{y}) + \frac{L}{2} \norm{\bm{x} - \bm{y}}^2 \geq 0, \quad \forall \bm x\in \mathcal{M}.
    \end{equation*}
    Consider $\bm x=\bm y+\alpha \bm d$ for an arbitrary feasible direction $\bm d$ (unit norm) and step-size $\alpha > 0$. Then, 
    \begin{equation*}
        \alpha \ip{\nabla f(\bm{y})  - \bm{v}^\star}{\bm d} + g(\bm y+\alpha \bm d) - g(\bm y) + \frac{L}{2}\alpha^2\geq 0, \quad \forall \alpha \bm d:\bm y+\alpha \bm d\in \mathcal{M}.
    \end{equation*}
    Dividing by $\alpha$ and taking the limit as $\alpha \to 0^+$, we obtain
    \begin{equation*}
        \langle\nabla f(\bm y)- \bm{v}^\star,\bm d\rangle + \langle \bm u^\star,  \bm d\rangle\geq 0, \quad \forall  \bm d:\lim_{\alpha\rightarrow 0^+}\bm y+\alpha \bm d\in \mathcal{M}.
    \end{equation*}
    for some $\bm u^\star \in \partial g(\bm y)$. 
    Since $\mathcal{M}$ is closed and convex, $\lim_{\alpha\rightarrow 0^+}\bm y+\alpha \bm d\in \mathcal{M}$ for all $\bm d=\bm x-\bm y$ such that $\bm{x} \in \mathcal{M}$.
    Consequently, 
    \begin{equation*}
        \langle \nabla f(\bm y) + \bm u^\star -\bm{v}^\star, \bm x - \bm y\rangle \geq 0, \quad \forall \bm x \in \mathcal{M}, 
    \end{equation*}
    which is equivalent to 
    $-\nabla f(\bm y) - \bm u^\star +\bm{v}^\star \in \mathcal{N}_{\mathcal{M}}(\bm y).$ 
    Therefore, we have condition \eqref{eqn:criticality} satisfied. 
}

\section{Application to One-bit MIMO Signal Recovery} \label{app:Mimo}

In this section, we will use the proposed Block EM method in one-bit MIMO signal recovery problem. First, we describe the problem. Later, simulation results are given to show the performance of our method compared to the conventional EM method.

\subsection{Problem Description}
Consider a MIMO uplink scenario where $N$ transmitters (users) simultaneously send a signal of length $W$ to the Base-Station (BS) which consists of $M$ receiving antennas. 
The signal will undergo a frequency selective fading channel. 
Then, the recieved signal at the BS will have the form
\begin{align}\label{MIMO_prob}
    \boldsymbol R_m = \sum_{n=1}^N \boldsymbol H_{m,n} \boldsymbol \theta_n + \boldsymbol \alpha_m \quad \text{and} \quad \boldsymbol{Y_m} = Q(\boldsymbol{R}_m), \quad \text{for} \quad m=1,\ldots,M,
\end{align}
where $\boldsymbol R_m\in \mathbb C^{W}$ is the received signal at $m$th antenna, $\boldsymbol H_{m,n}\in \mathbb C^{W\times W} $ denotes the channel circulant matrix between the $n$the user and $m$th receiver, $\boldsymbol \theta_n\in \mathbb C^W$ is the transmitted signal of the $n$th user, and $\alpha_m\in\mathbb C^W$ is the complex additive white Gaussian noise for the $m$th receiver with mean $\boldsymbol 0$ and covariance matrix $\sigma_{\alpha}^2 \boldsymbol I$. Note that each $H_{m,n}$ is constructed from the channel impulse response $h_{m,n}=\{h_{m,n}^0,h_{m,n}^1,\ldots,h_{m,n}^{L}, 0\ldots, 0\}\in\mathbb C^W$ between the $n$th user and $m$th receiver with $L$ being the number of channel taps. The operator $Q(.)$ serves as the sign operator on the real and imaginary parts of its argument. Throughout this section we assume the transmitted signal to be modulated with 16 QAM and $W\gg L$. This means that $\bm{\theta} \in \{-3,-1,+1,+3\}$. It is possible to recover such signal with EM method as \cite{8491219}. Here, the aim is to use the Block EM method to recover the modulated signal $\boldsymbol \theta$. To do so, we represent \eqref{MIMO_prob} as 
\begin{align*}
    \boldsymbol R = \mathcal{H}_{cir}\boldsymbol \theta + \boldsymbol \alpha \qquad \text{and} \qquad \boldsymbol{Y} = Q(\boldsymbol R)
\end{align*}
where $\boldsymbol R \in \mathbb C^{MW\times 1},\boldsymbol \alpha\in \mathbb C^{MW\times 1},\boldsymbol{Y}\in \mathbb C^{MW\times 1},\boldsymbol \theta\in \mathbb C^{NW\times 1}$ are the vertically stacked $\boldsymbol R_m,\boldsymbol \alpha_m,\boldsymbol{Y}_m,\boldsymbol \theta_n$ over $m$ and $n$ respectively. $\mathcal{H}_{cir}\in\mathbb C^{MW\times NW}$ is the block-circulant Toeplitz matrix made from $\boldsymbol{H}_{m,n}$'s. According to \eqref{MIMO_prob} we need to form the conditional probability distribution function (PDF) $p(\boldsymbol R|\boldsymbol Y,\boldsymbol \theta)$ (note that we are treating $\bm{y}$ as the observed variable and $\bm R$ as the latent variable). Using the Gaussian assumption, the joint PDF $p(\boldsymbol R,\boldsymbol Y|\boldsymbol \theta)$ becomes
\begin{align*}
    p(\boldsymbol R,\boldsymbol Y|\boldsymbol \theta) = \tfrac{\mathbb I_{Q(\boldsymbol R)}(\boldsymbol Y)}{\pi\sigma^2_{\alpha}}e^{-\tfrac{(\boldsymbol R - \mathcal{H}_{cir}\boldsymbol \theta)^2}{\sigma^2_{\alpha}}}%
\end{align*}
with $\mathbb I_{\mathbb Q(\boldsymbol R)}(\boldsymbol Y)$ being one when $Q(\boldsymbol R)=\boldsymbol  Y$ and $(.)^2$ denoting element-wise square operator. Therefore $\hat P(\boldsymbol R)  := \hat P(\boldsymbol R|\boldsymbol Y,\boldsymbol \theta)$ in Block EM becomes
\begin{align}\label{latent_dist}
    \hat P(\boldsymbol R|\boldsymbol Y,\boldsymbol \theta) = \frac{e^{-\frac{(\boldsymbol R - \mathcal{H}_{cir}\boldsymbol \theta)^2}{\sigma^2_{\alpha}}}}{\int_\mathcal{D} e^{-\frac{(\boldsymbol R - \mathcal{H}_{cir}\boldsymbol \theta)^2}{\sigma^2_{\alpha}}}d\boldsymbol R}
\end{align}
where $\mathcal{D}_i = (-\infty,0)$ when  {$\bm Y_i<0$} and $\mathcal{D}_i = (0,+\infty)$ when  {$\bm{Y}_i>0$}. 
The second update of Block EM has the objective $\mathbb E_{\hat P(\bm R| {\bm{Y}},\bm{\theta})}\left[-\log(P(\bm R, {\bm{Y}}| \theta_{i_k}))\right]$. 
From the structure of $-\log(P(\bm R, {\bm{Y}}| \theta_{i_k}))$ we know
\[-\log(P(\bm R, {\bm{Y}}| \theta_{i_k})) = \frac{\bm R^T \bm R -\bm R^T\mathcal{H}_{cir}\bm{\theta} - \bm{\theta}^T\mathcal{H}_{cir}^T\bm R + \bm{\theta}^T\mathcal{H}_{cir}^T\mathcal{H}_{cir}\bm{\theta}}{\sigma^2_{\alpha}}.\]
Note that for simplicity, we considered $\theta_{i_k}$ in $\bm{\theta}$ as the optimization variable. Therefore, the first term is independent from the optimization variable and we replace it with $\hat{\bm R}^T \hat{\bm R} = \mathbb{E}_{\hat P(\bm R| {\bm{Y}},\bm\theta)}[\bm R]^T\mathbb{E}_{\hat P(\bm R| {\bm{Y}},\bm\theta)}[\bm R]$. Thus,
\begin{align}\label{eqn:conv_obj_MIMO}
\mathbb E_{\hat P(\bm R| { {\bm{Y}}},\bm{\theta})}\left[-\log(P(\bm R, {\bm{Y}}| \theta_{i_k}))\right]&= \frac{\hat{\bm R}^T \hat{\bm R} -\hat{\bm R}^T\mathcal{H}_{cir}\bm{\theta} - \bm{\theta}^T\mathcal{H}_{cir}^T\hat{\bm R} + \bm{\theta}^T\mathcal{H}_{cir}^T\mathcal{H}_{cir}\bm{\theta}}{\sigma^2_{\alpha}}\nonumber\\
&= \frac{\|\hat{\bm R} - \mathcal{H}_{cir}\bm{\theta}\|^2}{\sigma^2_{\alpha}}
\end{align}
Therefore, in this problem we can evaluate the expected value of the latent variables $\boldsymbol{R}$ through \eqref{latent_dist} and then minimize the convex objective \eqref{eqn:conv_obj_MIMO} to recover the transmitted symbols $\boldsymbol{\theta}$. The coordinate-wise calculation of the expected value of the latent variables $\boldsymbol{R}$ will take the form 
\begin{align}\label{coordinate_mimo_em1}
    \hat{\bm R}^{k+1} := \mathbb E_{\hat P(\bm R| {\bm{Y}},\bm{\theta}^k)}\left[\bm R\right] &= \sigma_{\alpha}^2\left(\frac{\bm Y_{Re}\phi(\bm Y_{Re}\bm Z_{Re}^k/\sigma_{\alpha})}{\Phi(\bm Y_{Re}\bm Z_{Re}^k/\sigma_{\alpha})} + j\frac{\bm Y_{Im}\phi(\bm Y_{Im}\bm Z_{Im}^k/\sigma_{\alpha})}{\Phi(\bm Y_{Im}\bm Z_{Im}^k/\sigma_{\alpha})}\right) \nonumber\\
    &+ \mathcal{H}_{cir}(:,i_k)\bm{\theta}_{i_k}^k 
\end{align}
where subscripts $Re,Im$ specify the real and the imaginary parts, $j=\sqrt{-1}$, $\bm Z^k=\mathcal{H}_{cir}\bm{\theta}^k$, $\phi(x) = \tfrac{1}{\sqrt{2\pi}}exp\{-\tfrac{x^2}{2}\}$, and $\Phi(x) = \int_{\infty}^x \phi(x')dx'$. With \eqref{coordinate_mimo_em1} at hand, we can easily update $\bm{\theta}^{k+1}_{i_k}$ as
\begin{align}\label{coordinate_mimo_em2}
    \bm{\theta}^{k+1}_{i_k} = \argmin_{\underset{\bm{x} \in \bm \{-3,-1,+1,+3\}}{\bm{x}\in \mathbb C^{\|i_k\|_0}}} \|\hat{\bm R}^{k+1}  - \mathcal{H}_{cir}(:,i_k)\bm{x}\|_2^2.
\end{align}
Therefore, iterative updates of \eqref{coordinate_mimo_em1} and \eqref{coordinate_mimo_em2} will recover $ \bm{\theta}^{k+1}_{i_k}$. Note that to project to $\{-3,-1,+1,+3\}$ we need to relax the constraint set to $x\in{[-3,3]}$, and assure the convergence of the updates before projection. This will lead to a small inner loop for each $i_k$.

\subsection{Numerical Experiments}
We conducted numerical simulations to verify the performance of the \algoem in this problem. 
Specifically, we are aiming to study the Bit Error Rate (BER) for various bit energy to noise spectral density ($E_b/N$) define {d} as
\begin{equation*}
    \frac{E_b}{N} = \frac{P \text{Tr}\left(\mathbb{E}_{\mathcal{H}_{cir}}\left[  \mathcal{H}_{cir}\mathcal{H}_{cir}^H\right]\right)}{MN\sigma_{\alpha}^2B},
\end{equation*}
 where $P$ denotes the total transmitted power and $B$ is number of bits in the signal constellation symbol.
 Consider a MIMO system employing 16-QAM with cyclic prefix omitted at the transmitter, $N=6$ users, and $M=64$ receiving antennas. The length of signals is $W=32$. 
 Channel was considered with complex Gaussian noise with mean zero and covariance matrix of $\sigma^2 \bm I$. The length of the fading is $L=5$ and signals undergo 4 paths to reach the receiver. 
 The number of inner loops for projection was 10 and the stopping criteria was $\|\xi^{u} -  \xi^{u-1}\|_2 \leq \lambda \|\xi^{u-1} \|_2 $ with $\lambda=10^{-3}$, $\xi^{u}= \frac{1}{25}\sum_{p=25(u-1)+1}^{25u} \bm{\theta}^{p} $. 
 This choice of $\xi^{u}$ is quite flexible and any short-term criteria to understand the behaviour of the update sequence can be applied. 
 The maximum number of iterations for the \algoem algorithm was $10^4$ and $10^3$ for the EM algorithm. 
 The algorithms were initialized at zero and the block size was 10. 
 The results are depicted in \Cref{fig:MIMO_bitsnr} for 50 Monte-Carlo simulations.
 
As shown in \Cref{fig:MIMO_bitsnr}, the \algoem performs slightly better than EM in this setting. Note that \algoem requires less computational resources than EM to maximize the likelihood function.

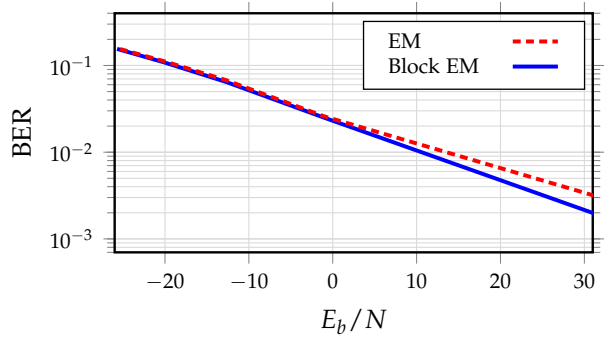
\begin{figure}[!t]
    \centering
    \begin{tikzpicture}
    
    \begin{groupplot}[group style={group size=1 by 1},        
        width = 0.50\textwidth, 
        height = 0.3\textwidth,        
        xmin = -26, 
        xmax = 31,      
        xlabel={$E_b/N$},    
        grid = both, 
        grid style = {gray!30},        
        tick label style = {font=\footnotesize}, 
        ]

    \nextgroupplot[
        ymin = 7e-4, 
        ymax = 4e-1,
        ymode = log,        
        ylabel = {BER},
        line width = 0.9pt,
        xtick align = outside,
        ytick align = outside,
        name = plot_mimo_bit,
        ]

    \addplot[blue, fill=none, name path=em_coo, line width=1.5pt] 
        coordinates {
(31.256602376952877,0.001953125000000) 
(-1.195146933571638,0.025338541666667)
(-12.702442388917058,0.064479166666667)
(-20.132264865503895,0.108776041666667)
(-25.714703572305243,0.155364583333333)}; 
        \label{bcem_mimo}
    \addplot[red,  fill=none, name path=em, line width=1.5pt, densely dashed] 
        coordinates {
(31.195464841815852,0.003151041666667)
(-0.984863269135394,0.025859375000000)
(-12.782174949724077,0.068020833333333)
(-20.146192984588428,0.112213541666667)
(-25.346415827374003,0.154687500000000)};
        \label{em_mimo}     
    \end{groupplot}
    \node[anchor=north east, draw = black, line width=0.5pt, fill=white, font=\footnotesize]  
        (legend) at ([shift={(-0.1cm,-0.1cm)}]plot_mimo_bit.north east) 
        {
            \begin{tabular}{l l}
                EM  & \ref*{em_mimo} \\
                \algoem & \ref*{bcem_mimo} 
            \end{tabular}
        };      
\end{tikzpicture}
\caption{Comparison of \algoem and classical EM in terms of BER vs. $E_b/N$.}
\label{fig:MIMO_bitsnr}
\end{figure} 

\end{document}